\theoremstyle{definition}
\newtheorem{cor}{Corollary}[section]
\newtheorem{dfn}[cor]{Definition} 
\newtheorem{ex}[cor]{Example}
\newtheorem{lem}[cor]{Lemma}
\newtheorem{prop}[cor]{Proposition}
\newtheorem{ques}[cor]{Question}
\newtheorem{rem}[cor]{Remark}
\newcommand{\Hq}{\mathbb{H}}
\newcommand{\Q}{\mathbb{Q}}
\newcommand{\R}{\mathbb{R}}
\newcommand{\Z}{\mathbb{Z}}
\newcommand\explanation[1]{%
  \ifhmode\unskip\fi\hspace*{1em plus 1fill}%
\parbox[t]{6cm}{#1}}
\newcounter{enumcounter}
\begin{document}

\title{Towards a Chevalley-style non-commutative algebraic geometry}
\author{Nikolaas D. Verhulst}
\address{TU Dresden\\
	Fachrichtung Mathematik\\
	Insitut f\"ur algebra\\
	01062 Dresden, Germany}
	\email{nikolaas\_damiaan.verhulst@tu-dresden.de}
\maketitle

\begin{abstract}
We aim to construct a non-commutative algebraic geometry by using generalised valuations. To this end, we introduce groupoid valuation rings and associate suitable value functions to them. We show that these objects behave rather like their commutative counterparts. Many examples are given and a tentative connections with Dubrovin valuation rings is established.
\end{abstract}

\section*{Introduction}

Although valuation theory has reached a venerable age by now, it is still fertile ground for new research, in connection with e.g. resolution of singularities or tropical geometry. In the commutative case, the theory of valuations acts as a kind of translation mechanism, turning a field extension of transcendence degree one\footnote{If the transcendence degree is higher things become more complicated, but valuation theory is still useful e.g. to verify properness of maps.} into its Zariski-Riemann surface and associating to a curve a collection of valuation rings in a field. We refer to Chevalley's classic work \cite{Chevalley} for an in-depth study of this correspondence. It seems reasonable to hope that, with a good non-commutative analogue of a valuation ring, one could mimic the commutative construction to arrive at non-commutative abstract Riemann surfaces and general non-commutative algebraic geometry. 

Over the years, many suggestions have been made for what this \textit{good analogue} could be. Besides Schilling's original definition of a non-commutative valuation ring (see below), the most important candidates are perhaps Dubrovin valuation rings (cfr. e.g. \cite{MMU} or \cite{theBook}) and gauges (cfr. \cite{theOtherBook}).

 The classical definition of a valuation ring as a subring $R$ of a field $k$ such that 
\begin{enumerate}[(1)]
	\item $\forall x\in k^{*}: x\notin R\Rightarrow x^{-1}\in R$ \explanation{($R$ is \emph{total})}
	\setcounter{enumcounter}{\value{enumi}}
\end{enumerate}
can easily be adapted for skewfields by adding the condition
\begin{enumerate}[(1)]
	\setcounter{enumi}{\value{enumcounter}}
	\item $\forall x\in k^{*}: xRx^{-1}=R$\explanation{($R$ is \emph{stable}).}
\end{enumerate}
Schilling has shown that, if these two conditions hold true in a given ring $R$, then one can associate to $R$ an equivalence class of valuation functions in much the same way as in the commutative case. Yet this definition is not completely satisfactory. For example, even in a very well-behaved non-commutative extension, valuations on the centre might not extend to the whole skewfield. A $p$-valuation on $\Q$, for instance, does not extend to a valuation on the skewfield of Hamilton quaternions $\Hq$ (unless $p=2$, see \cite{Wadsworth}). 

This problem can be solved by considering $\Hq$ as a $(\Z/2\Z)^{2}$-graded skewfield extension of $\R$. Just like a field is defined as a ring wherein every non-zero element is invertible, so is a graded skewfield defined as a graded ring wherein every \emph{homogeneous} element is invertible. Similarly, a graded valuation ring is a homogeneous subring $R$ of a graded skewfield such that (1) and (2) hold for any \emph{homogeneous} $x\in k^{*}$. With this definition, it is easy to check that, if we write $\R_{p}$ for the ring of positives of some extension of the $p$-adic valuation to $\R$, the ring $\R_{p}\oplus \R_{p}i\oplus \R_{p}j\oplus \R_{p}k$ is a graded valuation ring on $\Hq$ considered as a $(\Z/2\Z)^{2}$-graded skewfield. Graded valuation rings were introduced by Johnson in the late seventies (cfr. \cite{Johnson}) and are still an active field of research (cfr. e.g. \cite{AAC}).

However, in a certain sense, the correct non-commutative counterpart for the notion of a field is not that of a skewfield but rather that of a simple artinian ring and, unfortunately, such rings need not be graded skewfields. We do know, on the other hand, that they can be equipped with a natural groupoid grading since they are isomorphic to matrix rings over skewfields -- and matrix rings are groupoid graded (see section \ref{Basics}). 

In this paper, we will introduce the notions of a $G$-skewfield (of which simple artinian rings will be examples) and a $G$-valuation ring by adapting conditions (1) and (2) to a groupoid-graded context. We will also introduce $G$-valuations -- a natural generalisation of valuations -- and we will show that, as in the commutative case, $G$-valuation rings are in one-one correspondence with equivalence classes of $G$-valuations. Since our versions of conditions (1) and (2) are far less rigid, we will be able to associate $G$-valuations to many relatively ill-behaved subrings as well. Finally, we will establish a tentative connection with Dubrovin valuation rings. 

The next step in the non-commutative algebraic geometry programme hinted at in the first paragraph, would be proving approximation theorems and developing $G$-divisor theory. Once so far, a $G$-graded version of the Riemann-Roch theorem should be within reach; an ambitious goal, perhaps, but a worthy one.

\section{Terminology and basic properties}\label{Basics}

Remember that a \emph{groupoid} is a (small) category wherein all morphisms are invertible. Alternatively, a groupoid can be thought of as a group for which the multiplication is only partially defined. As a concrete example, let $e_{ij}$ be the $n\times n$-matrix with a one on place $i,j$ and zeroes everywhere else. Then $\Delta_{n}=\left\{e_{ij}\mid 1\leq i,j\leq n\right\}$ equipped with the partial multiplication \[(.\cdot.):\Delta_{n}\times\Delta_{n}\to\Delta_{n}: (e_{ij},e_{kl})\mapsto
\begin{cases}
	e_{il}&\text{if $j=k$}\\
	\text{undefined}&\text{if $j\neq k$}
\end{cases}\] is a groupoid. In fact, we can do this more generally: if $G$ is any group, then the set $G\left[\Delta_{n}\right]=G\times\Delta_{n}$ with the partial multiplication \[(.\cdot.):G\left[\Delta_{n}\right]\times G\left[\Delta_{n}\right]\to G\left[\Delta_{n}\right]: ((g,e_{ij}),(g',e_{kl}))\mapsto
\begin{cases}
	(gg',e_{il})&\text{if $j=k$}\\
	\text{undefined}&\text{if $j\neq k$}
\end{cases}\]
is a groupoid as well.

 For the remainder of this paper, $G$ will be a groupoid and $R$ will be a ring. We will use the notation $\textbf{s}(g)=gg^{-1}$ and $\textbf{t}(g)=g^{-1}g$ for the source and the target of $g\in G$ respectively. Note that the multiplication $gg'$ of two elements $g,g'\in G$ is defined if and only if $\textbf{t}(g)=\textbf{s}(g')$. Two elements $g$ and $g'$ of $G$ are called \emph{connected} if there is a morphism from $\textbf{t}(g)$ to $\textbf{s}(g')$. This is a reflexive and transitive property which, since $G$ is a groupoid, is also symmetric. The connected components are the equivalence classes with respect to connectedness, i.e. the maximal subsets of $G$ in which any two elements are connected.

\begin{dfn}
$R$ is said to be \emph{$G$-graded} if there are abelian subgroups $(R_{g})_{g\in G}$ such that $R=\bigoplus_{g\in G} R_{g}$ and $R_{g}R_{g'}\subseteq R_{gg'}$ if $gg'$ exists while $R_{g}R_{g'}=0$ otherwise. If $R_{g}R_{g'}=R_{gg'}$ whenever $gg'$ exists, then the grading is called \emph{strong}.
\end{dfn}

\begin{ex}
The \emph{groupoid ring} $R[G]$ is constructed by endowing the set \[R[G]=\left\{f:G\to R\mid \#\left\{g\in G\mid f(g)\neq 0\right\}<\infty\right\},\] with a sum and a multiplication as follows:\[ (f+f')(g)=f(g)+f'(g),\quad(ff')(g)=\sum_{g'g''=g}f(g')f'(g'').\] These operations are well-defined since $f$ and $f'$ have finite support. In a similar fashion as for group rings it can be checked that they define a ring structure on $R[G]$. This ring is canonically $G$-graded by putting \[R[G]_{g}=\left\{f:G\to R\mid \forall g'\neq g:f(g')=0\right\}\] for all $g\in G$. The most important example of groupoid graded rings are matrix rings: $M_{n}(R)$ is isomorphic to $R[\Delta_{n}]$.
\end{ex}

An element $h$ is in a groupoid-graded ring $R=\bigoplus_{g\in G}R_{g}$ is called \emph{homogeneous} if it is in $\bigcup R_{g}$. An ideal or a subring is called homogeneous if it is generated by homogeneous elements. We will call a homogeneous ideal $G$-maximal if it is maximal among proper homogeneous ideals. Similarly, we will call a $G$-graded ring $G$-simple if it contains no proper homogeneous ideals. The \emph{support} of an element $r=\sum_{g\in G} r_{g}$ is the set of $g\in G$ for which $r_{g}\neq 0$. The support of a set is the union of the supports of its elements.

We use $G_{0}$ for the \emph{principal component}, i.e. the set of idempotent elements of $G$. It is harmless to assume that $G_{0}$ consists of but finitely many elements and that, if $1=\sum_{e\in G_{0}}1_{e}$ is the homogeneous decomposition of $1$, we have $1_{e}\neq0$  for all $e\in G_{0}$ (cfr. \cite{Lundstrom}).

\begin{prop}
If $R$ is $G$-graded, then the following elementary properties hold:
\begin{enumerate}[(1)]
	\item $R_{e}$ is a ring for any idempotent $e$ of $G$.
	\item If $I$ is a $G$-ideal of $R$, then $I_{e}$ is an ideal of $R_{e}$ for every idempotent $e$.
	\item $R_{g}$ is a left $R_{\textbf{s}(g)}$, right $R_{\textbf{t}(g)}$-module.
	\item $G$ is a group if and only if there is some invertible homogeneous element.
\end{enumerate}
\end{prop}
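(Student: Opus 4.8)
The plan is to prove each of the four elementary properties in turn, since they are largely independent and each follows fairly directly from the grading axioms $R_g R_{g'}\subseteq R_{gg'}$ (when $gg'$ exists) and $R_g R_{g'}=0$ otherwise. For property (1), I would observe that if $e$ is idempotent then $\mathbf{s}(e)=\mathbf{t}(e)=e$, so $ee=e$ exists and the grading gives $R_e R_e\subseteq R_e$; hence $R_e$ is closed under multiplication, and since it is by definition an abelian subgroup under addition, it is a subring. The only point requiring a word is whether $R_e$ contains a multiplicative identity: here I would use the remark preceding the proposition that $1=\sum_{e\in G_0}1_e$ with each $1_e\neq 0$, and check that $1_e$ acts as a unit on $R_e$, so that $R_e$ is a ring (with its own unit $1_e$).

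For property (2), the key point is that the homogeneous component of an ideal is again closed under the relevant action. If $I$ is a $G$-ideal, then for $a\in I_e$ and $r\in R_e$ the product $ra$ lies in $R_e\cap I=I_e$ because $R_eR_e\subseteq R_e$ and $I$ is an ideal, and similarly on the right; so $I_e$ is an ideal of $R_e$. Here I would be slightly careful about what ``$I_e$'' means — it should be the degree-$e$ homogeneous component of $I$ — and note that since $I$ is a homogeneous (i.e. $G$-graded) ideal, $I=\bigoplus_g I_g$ with $I_g=I\cap R_g$, which makes the intersection argument clean. For property (3), I would simply unwind the definitions of source and target: $\mathbf{s}(g)g$ and $g\mathbf{t}(g)$ are both defined and equal to $g$, so $R_{\mathbf{s}(g)}R_g\subseteq R_g$ and $R_gR_{\mathbf{t}(g)}\subseteq R_g$, giving the left- and right-module structures respectively; the module axioms are then inherited from the ring structure of $R$.

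The most interesting item is property (4), the equivalence. The direction ``$G$ a group $\Rightarrow$ there is an invertible homogeneous element'' is easy: if $G$ is a group it has a single identity $e$, so $1=1_e$ is itself homogeneous and of course invertible. The reverse direction is where the real content lies, and I expect it to be the main obstacle. Suppose $u\in R_g$ is homogeneous and invertible; I would analyse the degree of $u^{-1}$. Writing $u^{-1}=\sum_h v_h$ in homogeneous components, the equation $uu^{-1}=1=\sum_{e\in G_0}1_e$ forces, degree by degree, that the only surviving product $uv_h$ lands in the principal component, which constrains $h$ to equal $g^{-1}$ (so that $\mathbf{t}(g)=\mathbf{s}(h)$ and $gh\in G_0$). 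From the existence of such a genuine two-sided inverse in degree $g^{-1}$ I then want to conclude that \emph{every} pair of elements of $G$ is composable, i.e. that $G_0$ is a singleton. The idea is that conjugating or multiplying by the homogeneous unit $u$ lets one move between source and target components and between distinct idempotents, collapsing $G_0$ to a single element; once $G_0$ is a singleton the groupoid is a group. The delicate step to get right is ruling out the degenerate possibilities (e.g. $u$ invertible only within a single block $R_e$ while other idempotents persist), so I would lean on the standing assumption that each $1_e\neq 0$ to force the unit to link up all the idempotents and thereby genuinely trivialise the groupoid structure.
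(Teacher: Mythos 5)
Your treatments of (1)--(3) are correct and essentially the paper's: closure of $R_e$ under multiplication from $ee=e$, the unit $1_e$ extracted from $1=\sum_{e\in G_0}1_e$ using that products of distinct idempotents are undefined, and the module structure on $R_g$ from $\textbf{s}(g)g=g=g\textbf{t}(g)$ (the paper derives (2) as a special case of (3), you verify it directly --- a trivial difference). For (4), your degree-by-degree analysis of $uu^{-1}=1$ is the right start, but the finishing move you envision is the wrong picture and would not close the argument: you say that ``conjugating or multiplying by the homogeneous unit $u$ lets one move between source and target components and between distinct idempotents,'' i.e.\ that $u$ \emph{links up} all the idempotents. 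It cannot. If $e\neq\textbf{t}(g)$ then $ge$ is undefined, so the grading axiom gives $R_gR_e=0$; a homogeneous element never connects its own source/target to any other idempotent multiplicatively. The idempotents are not collapsed by being linked --- they are killed.

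The correct conclusion falls out of the setup you already have, in one line. For $e\in G_0$ the equation $gh=e$ is solvable only when $e=\textbf{s}(g)$ (and then $h=g^{-1}$), so the degree-$e$ component of $uu^{-1}$ vanishes for every $e\neq\textbf{s}(g)$; comparing with $1=\sum_{e\in G_0}1_e$ forces $1_e=0$ for all such $e$, contradicting the standing assumption $1_e\neq 0$ unless $G_0=\{\textbf{s}(g)\}$. Symmetrically, $u^{-1}u=1$ gives $G_0=\{\textbf{t}(g)\}$, and a groupoid with a single idempotent is a group. This is exactly the paper's argument in different clothing: the paper observes that $u1_e=0$ for $e\neq\textbf{t}(g)$ with $1_e\neq 0$ makes $u$ a zero divisor, which an invertible element cannot be, so every idempotent equals $\textbf{t}(g)$ and $\textbf{s}(g)$. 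So the gap is small --- you have all the ingredients --- but the mechanism you named (moving between idempotents) is not the one that works, and as written the crucial step of (4) is not proved.
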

\begin{proof}
$R_{e}$ is by definition closed under addition and, since $e$ is an idempotent, it is also closed under multiplication. Since the product of two distinct idempotents $e$ and $e'$ of $G$ is never defined, we have $r=r1=\sum_{e\in G_{0}}r1_{e}=r1_{e}$ for all $r\in R_{e}$. Hence $1_{e}$ is the unit of $R_{e}$. For (3) it suffices to note that the map \[(.\cdot .):R_{\textbf{s}(g)}\times R_{g}\to R_{g}:(x,y)\mapsto xy\]defines a left $R_{\textbf{s}(g)}$-multiplication on $R_{g}$, the right $R_{\textbf{t}(g)}$-multiplication being defined analogously. (2) is a special case of (3) in disguise where $I_{e}\subseteq R_{e}$. To prove (4), note that, since $ee'$ is undefined for idempotents $e\neq e'$, any homogeneous element $h\in R_{g}$ must be a zero divisor if there is some unit $e\neq \textbf{t}(h)$ or $e\neq \textbf{s}(h)$. If $G$ is a group with unit $e$, then $1\in R_{e}$ is homogeneous and invertible.
\end{proof}

\begin{prop}
Let $R$ be a strongly $G$-graded ring. The homogeneous ideals of $R$ are in 1-1 correspondence with ideals \[I_{e_{1}}\subseteq R_{e_{1}},...,I_{e_{n}}\subseteq R_{e_{n}}\] where the $e_{i}$ are representatives of the connected components of $G$. 
\end{prop}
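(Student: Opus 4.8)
The plan is to produce two mutually inverse maps. Let $\Phi$ send a homogeneous ideal $I=\bigoplus_{g}I_{g}$ (with $I_{g}=I\cap R_{g}$) to the tuple $(I_{e_{1}},\dots,I_{e_{n}})$ of its components at the chosen idempotents; by part (2) of the previous proposition each $I_{e_{i}}$ is an ideal of $R_{e_{i}}$, so $\Phi$ is well-defined. Let $\Psi$ send a tuple $(I_{e_{1}},\dots,I_{e_{n}})$ to the two-sided ideal $\langle I_{e_{1}}\cup\dots\cup I_{e_{n}}\rangle$ it generates in $R$; since each $I_{e_{i}}$ lies in $R_{e_{i}}$ it consists of homogeneous elements, so this ideal is homogeneous. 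The whole content is showing $\Phi$ and $\Psi$ are inverse to each other.

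The engine of the argument is the following consequence of strong grading: for any homogeneous ideal $I$ and any $g\in G$,
\[ I_{g}=I_{\textbf{s}(g)}R_{g}=R_{g}I_{\textbf{t}(g)},\qquad\text{so}\qquad I_{\textbf{s}(g)}=R_{g}I_{\textbf{t}(g)}R_{g^{-1}}. \]
I would prove the first equality by two inclusions. For ``$\supseteq$'', note $I_{\textbf{s}(g)}R_{g}\subseteq I$ and $I_{\textbf{s}(g)}R_{g}\subseteq R_{\textbf{s}(g)}R_{g}=R_{g}$ (as $\textbf{s}(g)g=g$), whence $I_{\textbf{s}(g)}R_{g}\subseteq I\cap R_{g}=I_{g}$. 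For ``$\subseteq$'', write $I_{g}=I_{g}R_{\textbf{t}(g)}=I_{g}R_{g^{-1}}R_{g}$, using the strong-grading identity $R_{g^{-1}}R_{g}=R_{\textbf{t}(g)}$, and observe $I_{g}R_{g^{-1}}\subseteq I\cap R_{\textbf{s}(g)}=I_{\textbf{s}(g)}$. The second equality is symmetric, and the displayed relation then follows by right-multiplying $I_{\textbf{s}(g)}R_{g}=R_{g}I_{\textbf{t}(g)}$ by $R_{g^{-1}}$ and using $R_{g}R_{g^{-1}}=R_{\textbf{s}(g)}$.

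Granting the engine, $\Psi\circ\Phi=\mathrm{id}$ is immediate: a homogeneous ideal is determined by its components $I_{g}$; each $I_{g}=I_{\textbf{s}(g)}R_{g}$ is determined by $I_{\textbf{s}(g)}$; and $I_{\textbf{s}(g)}=R_{h}I_{e_{i}}R_{h^{-1}}$ is determined by the representative component $I_{e_{i}}$, where $e_{i}$ is the representative of the component containing $g$ and $h$ is any element with $\textbf{s}(h)=\textbf{s}(g)$, $\textbf{t}(h)=e_{i}$ (which exists precisely because $\textbf{s}(g)$ and $e_{i}$ are connected). Hence two homogeneous ideals with the same representative components coincide. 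For $\Phi\circ\Psi=\mathrm{id}$ one computes the representative components of the generated ideal directly.

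I expect the genuine obstacle to sit at this last computation, namely $\langle I_{e_{1}}\cup\dots\cup I_{e_{n}}\rangle\cap R_{e_{j}}=I_{e_{j}}$. Collecting the $R_{e_{j}}$-component of $R_{g'}I_{e_{i}}R_{g''}\subseteq R_{g'e_{i}g''}$, only $e_{i}=e_{j}$ can contribute (distinct representatives are unconnected), and $g'e_{j}g''=e_{j}$ forces $g''=g'^{-1}$ with $g'$ a loop at $e_{j}$; thus $\langle\cdots\rangle\cap R_{e_{j}}=\sum R_{a}I_{e_{j}}R_{a^{-1}}$, the sum over all $a\in G$ with $\textbf{s}(a)=\textbf{t}(a)=e_{j}$. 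The term $a=e_{j}$ already returns $R_{e_{j}}I_{e_{j}}R_{e_{j}}=I_{e_{j}}$, so ``$\supseteq$'' is clear; the delicate point is ``$\subseteq$'', which holds exactly when $I_{e_{j}}$ is stable under conjugation by the vertex group $\{a\in G\mid \textbf{s}(a)=\textbf{t}(a)=e_{j}\}$. This stability is automatic when $I_{e_{j}}$ comes from a homogeneous ideal as $I\cap R_{e_{j}}$ (conjugating keeps an element inside both $I$ and $R_{e_{j}}$), so $\Psi$ inverts $\Phi$ precisely on vertex-stable tuples; when $G=\Delta_{n}$ the vertex groups are trivial and the issue evaporates. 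I would therefore either state the correspondence in terms of vertex-stable ideals or work under the assumption that these vertex groups are trivial, and I regard establishing this well-definedness as the crux of the proof.
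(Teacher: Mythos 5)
Your first half coincides with the paper's argument: your ``engine'' $I_{g}=I_{\textbf{s}(g)}R_{g}=R_{g}I_{\textbf{t}(g)}$ is, up to packaging, the paper's displayed chain $I_{g}=R_{g'}R_{g'^{-1}}I_{g}R_{g''^{-1}}R_{g''}\subseteq R_{g'}I_{e}R_{g''}\subseteq I_{g}$, and both yield that a homogeneous ideal is determined by its components at the representatives. Where you diverge is the converse, and your diagnosis of the difficulty is not pedantry but a genuine gap in the paper's proof: the paper simply declares that, after choosing connectors $g',g''$ for each $g$, the set $I=\sum_{g}R_{g'}I_{e}R_{g''}$ ``is then a homogeneous ideal of $R$'', verifying neither independence of the choice of connectors nor two-sidedness. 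Exactly as you compute, both verifications reduce to $R_{a}I_{e}R_{a^{-1}}\subseteq I_{e}$ for all loops $a$ at $e$ (elements with $\textbf{s}(a)=\textbf{t}(a)=e$), and this can fail, so the proposition is false as stated. Concretely: let $G=\Z/2\Z$, a connected groupoid with one idempotent and nontrivial vertex group, and let $R=(k\times k)\rtimes G$ be the skew group ring for the swap action, which is strongly $G$-graded with $R_{e}=k\times k$. The ideal $I_{e}=k\times 0$ is not swap-stable: with $u$ the group generator, $I_{e}\oplus uI_{e}$ is a left but not a right ideal, while the two-sided homogeneous ideal generated by $I_{e}$ contains $u(1,0)u^{-1}=(0,1)$ and hence meets $R_{e}$ in all of $k\times k$. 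So $R$ has exactly two homogeneous ideals while $R_{e}$ has four, and no bijection of the asserted kind exists.

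Your repair is the correct one: the bijection is between homogeneous ideals and tuples of \emph{vertex-stable} ideals, i.e.\ those with $R_{a}I_{e_{i}}R_{a^{-1}}\subseteq I_{e_{i}}$ for all loops $a$ at $e_{i}$; this stability is automatic for tuples of the form $(I\cap R_{e_{i}})_{i}$, so your injectivity argument still applies and the corrected correspondence is proved in full by your computation $\langle I_{e_{1}}\cup\dots\cup I_{e_{n}}\rangle\cap R_{e_{j}}=\sum_{a}R_{a}I_{e_{j}}R_{a^{-1}}$. The hypothesis is vacuous when the vertex groups are trivial, as for $\Delta_{n}$, and also harmless wherever the components $R_{e}$ are skewfields (as in the paper's $G$-skewfield examples, including $k[\Delta_{2}[\Z/2\Z]]$, whose component rings at idempotents are just $k$), so the paper's later uses survive; but the paragraph immediately following the proposition should likewise be read with the correction: the $G$-maximal ideals of a general strongly graded ring correspond to maximal vertex-stable ideals of the components, not to arbitrary maximal ones.
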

\begin{proof}
Suppose $g$ is in the connected component of $e$, i.e. $g=g'eg''$. Since $R$ is strongly graded, we must have that \[I_{g}=R_{g'}R_{g'^{-1}}I_{g}R_{g''^{-1}}R_{g''}\subseteq R_{g'}I_{e}R_{g''}\subseteq I_{g'eg''}=I_{g}\] so any two homogeneous ideals of $R$ restricting to the same ideals on $R_{e_{1}},...,R_{e_{n}}$ must be equal. Suppose, on the other hand, if $I_{e_{1}}\subseteq R_{e_{1}},...,I_{e_{n}}\subseteq R_{e_{n}}$ are ideals in their respective rings. If $g$ is in the same connected component as $e$, then we can define $I_{g}=R_{g'}I_{e}R_{g''}$ where $g'$ and $g''$ are connecting elements for $g$ and $e$. $I=\sum_{g\in G} I_{g}$ is then a homogeneous ideal of $R$.
\end{proof}

As an immediate consequence of the preceding proposition, the $G$-maximal ideals of a strongly $G$-graded $R$ are those corresponding to a maximal ideal in one of the connected components and to $R_{g}$ for any $g$ not in that component. Therefore, the intersection of the $G$-maximal ideals -- which we call the $G$-Jacobson radical -- is the homogeneous ideal corresponding to the Jacobson radical in every connected component.

\section{$G$-skewfields}

 We write, for any $a$ in a $G$-graded $R$, 
\[
	\textbf{t}(a)=\sum_{\substack{e\in G_{0}\\ a1_{e}\neq 0}}1_{e}\quad\text{and}\quad \textbf{s}(a)=\sum_{\substack{e\in G_{0}\\ 1_{e}a\neq 0}}1_{e}.
\]
A $G$-inverse of $a$ is an element $b$ satisfying
\[
	\textbf{s}(a)=ab=\textbf{t}(b)\quad\text{and}\quad	\textbf{s}(b)=ba=\textbf{t}(a).
\]
If $a$ has a $G$-inverse, we say that it is $G$-invertible. We will use the notation $a^{-1}$ for the $G$-inverse of $a$, but one should keep in mind that the $G$-inverse of $a$ may exist even if $a$ is not invertible in $R$. In a desperate attempt to avoid confusion, we will denote the set of $G$-invertible elements of a $G$-graded ring $R$  by $R^{*}$, while the set of invertible elements will be denoted by $U(R)$.

\begin{prop}
If $R$ is $G$-graded, then:
\begin{enumerate}[(1)]
	\item The $G$-inverse of $a\in R$, if it exists, is unique.
	\item $(ab)^{-1}=b^{-1}a^{-1}$ if all terms involved exist.
	\item The $G$-inverse of $a\in R_{h}$, if it exists, is in $R_{h^{-1}}$.
	\item If $a$ is invertible in $R$, say $ba=ab=1$, then $b$ is the $G$-inverse of $a$.
	\item The grading on $R$ is strong if and only if $R_{g}\cap R^{*}\neq\emptyset$ for all $g\in G$.
\end{enumerate}
\end{prop}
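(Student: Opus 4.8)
The plan is to first record two identities that drive everything else. From $1=\sum_{e\in G_{0}}1_{e}$ one checks that the $1_{e}$ are orthogonal idempotents (the cross terms $1_{e}1_{e'}\in R_{ee'}=0$ vanish, and directness recovers the diagonal ones), and that for every $a\in R$ one has $\textbf{s}(a)a=a=a\textbf{t}(a)$ with $\textbf{s}(a),\textbf{t}(a)$ idempotent, simply by collapsing $a=\sum_{e}1_{e}a=\sum_{e}a1_{e}$ and dropping the zero terms; for nonzero homogeneous $a\in R_{h}$ these reduce to $\textbf{s}(a)=1_{\textbf{s}(h)}$ and $\textbf{t}(a)=1_{\textbf{t}(h)}$. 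Part (1) is then the usual sandwich: if $b$ and $c$ are both $G$-inverses of $a$,
\[ b=b\textbf{t}(b)=b\textbf{s}(a)=b(ac)=(ba)c=\textbf{t}(a)c=\textbf{s}(c)c=c, \]
using only associativity and the defining relations. Part (4) is almost as quick: if $ab=ba=1$, the standing hypothesis $1_{e}\neq0$ forbids $1_{e}a=0$ (else $1_{e}=1_{e}ab=0$) and likewise $a1_{e}=0$, so $\textbf{s}(a)=\textbf{t}(a)=1$ and symmetrically for $b$; the four defining relations then read straight off $ab=ba=1$, and (1) identifies $b$ as the $G$-inverse.

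For part (3) I would argue by support. Writing the $G$-inverse as $b=\sum_{g}b_{g}$, the relations $\textbf{t}(b)=\textbf{s}(a)=1_{\textbf{s}(h)}$ and $\textbf{s}(b)=\textbf{t}(a)=1_{\textbf{t}(h)}$ give $b=b1_{\textbf{s}(h)}=1_{\textbf{t}(h)}b$, so $b$ is supported on $\{g:\textbf{s}(g)=\textbf{t}(h),\ \textbf{t}(g)=\textbf{s}(h)\}$. Reading $ab=1_{\textbf{s}(h)}\in R_{\textbf{s}(h)}$ degree by degree, the term $ab_{g}\in R_{hg}$ can survive only when $hg=\textbf{s}(h)=hh^{-1}$, which by left cancellation in the groupoid forces $g=h^{-1}$; hence $ab_{g}=0$ for every other $g$ in the support. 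Then for $g\neq h^{-1}$ one cleans up with the inverse itself: $b_{g}=1_{\textbf{t}(h)}b_{g}=(ba)b_{g}=b(ab_{g})=0$, leaving $b=b_{h^{-1}}\in R_{h^{-1}}$.

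Part (2) I would again reduce to the uniqueness in (1): it suffices to verify that $b^{-1}a^{-1}$ satisfies the defining relations of the $G$-inverse of $ab$. The core computation is
\[ (ab)(b^{-1}a^{-1})=a(bb^{-1})a^{-1}=a\,\textbf{s}(b)\,a^{-1}, \]
and the symmetric one handles $c(ab)$. The delicate point, and the step I expect to cost the most care here, is that the idempotents must interlock: one must see that $\textbf{s}(b)$ acts as the identity between $a$ and $a^{-1}$, i.e.\ $a\,\textbf{s}(b)\,a^{-1}=aa^{-1}=\textbf{s}(a)$, and that $\textbf{s}(ab)=\textbf{s}(a)$ (with the evident duals for the target side). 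For homogeneous $a\in R_{g}$, $b\in R_{h}$ with $ab\neq0$ this is immediate, since $\textbf{t}(a)=1_{\textbf{t}(g)}=1_{\textbf{s}(h)}=\textbf{s}(b)$ and $\textbf{s}(gh)=\textbf{s}(g)$; the general case amounts to checking that the surviving idempotent bookkeeping still matches, which is the real content.

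Finally, part (5). The reverse implication I expect to be clean: given a nonzero $G$-invertible $u\in R_{g}$, part (3) puts $u^{-1}\in R_{g^{-1}}$ with $uu^{-1}=1_{\textbf{s}(g)}$, so for any $x\in R_{gg'}$ (with $gg'$ defined) one has $\textbf{s}(x)=1_{\textbf{s}(g)}=uu^{-1}$, whence $x=u(u^{-1}x)$ with $u^{-1}x\in R_{g^{-1}}R_{gg'}\subseteq R_{g'}$; thus $R_{gg'}\subseteq R_{g}R_{g'}$ and the grading is strong. The forward implication is where I expect the main obstacle of the whole proposition to sit. Strongness gives $1_{\textbf{s}(g)}\in R_{g}R_{g^{-1}}$, hence $1_{\textbf{s}(g)}=\sum_{i}x_{i}y_{i}$ with $x_{i}\in R_{g}$ and $y_{i}\in R_{g^{-1}}$, but manufacturing from such a sum a \emph{single} homogeneous element of $R_{g}$ that carries a $G$-inverse is exactly the classical passage from a strongly graded ring to a crossed product. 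I would expect this extraction to succeed only once the corner rings $R_{e}$ are sufficiently well-behaved (as in the $G$-skewfield setting motivating the paper, where each $R_{e}$ is a skewfield so that the relevant invertible bimodules are free of rank one), and I would flag precisely this step as the crux to be handled with care.
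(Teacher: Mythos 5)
Your parts (1), (3), (4) and the ``if'' half of (5) are in substance the paper's own proofs: the same sandwich chain for uniqueness, the same degree-by-degree reading of $aa^{-1}=\textbf{s}(a)$ to kill the components $b_{g}$, $g\neq h^{-1}$ (the paper uses $ab_{g}=0$ and $b_{g}a=0$ directly where you use cancellation plus $b_{g}=(ba)b_{g}$ -- same substance, yours slightly more explicit), the same appeal to the standing convention $1_{e}\neq 0$ in (4), and the same factorisation $x=u(u^{-1}x)$ through $1_{\textbf{s}(g)}=uu^{-1}$ for ``$G$-invertibles in every degree implies strong''.

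On (2), which the paper dismisses as ``obvious'', your worry about the interlocking idempotents is genuinely needed: ``all terms involved exist'' must be read in the groupoid $R^{*}$, i.e.\ $ab$ exists only when $\textbf{t}(a)=\textbf{s}(b)$. Without that composability the formula is simply false: in $M_{2}(k)$ with its $\Delta_{2}$-grading take $a=\bigl(\begin{smallmatrix}1&1\\0&1\end{smallmatrix}\bigr)$ and $b=e_{11}$; then $a^{-1}$, $b^{-1}$ and $(ab)^{-1}$ all exist, $ab=e_{11}=(ab)^{-1}$, yet $b^{-1}a^{-1}=e_{11}-e_{12}$. Here $\textbf{t}(a)=1\neq e_{11}=\textbf{s}(b)$. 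Under the composability hypothesis your computation closes without further trouble: $(ab)(b^{-1}a^{-1})=a\,\textbf{s}(b)\,a^{-1}=a\,\textbf{t}(a)\,a^{-1}=\textbf{s}(a)$, the identities $\textbf{s}(ab)=\textbf{s}(a)$ and $\textbf{t}(ab)=\textbf{t}(b)$ then follow by the usual support argument, and (1) identifies $b^{-1}a^{-1}$ as the $G$-inverse. So you correctly isolated what the claim needs, even if you stopped short of noting that it fails outright without it.

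The step you flagged as the crux of the ``only if'' half of (5) -- extracting a single $G$-invertible element of $R_{g}$ from $1_{\textbf{s}(g)}=\sum_{i}x_{i}y_{i}$ with $x_{i}\in R_{g}$, $y_{i}\in R_{g^{-1}}$ -- is indeed where the trouble lies, and your suspicion is sharper than the paper itself, which disposes of it in one line (``\ldots which implies that some element of $R_{g}$ is $G$-invertible''). That implication is false in general: it is precisely the classical gap between strongly graded rings and crossed products. Grade $M_{3}(k)$ by $\Z/2\Z$ with $R_{\bar{0}}$ the block-diagonal copy of $M_{2}(k)\times k$ and $R_{\bar{1}}$ the complementary off-diagonal blocks; the grading is strong, since products of the off-diagonal blocks yield all rank-one $2\times 2$ matrices and these span $R_{\bar{0}}$, but every element of $R_{\bar{1}}$ has rank at most $2$, and for a group grading a nonzero $G$-inverse is an ordinary two-sided inverse by (4), so $R_{\bar{1}}$ meets no $G$-invertible element. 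Hence the ``only if'' direction of (5) is false as stated, and your instinct that the extraction works only when the corner rings are well-behaved is correct -- in the $G$-skewfield setting every nonzero homogeneous element is $G$-invertible, so strongness only has to supply $R_{g}\neq 0$ there. Note that the paper only ever uses the ``if'' direction (to conclude that $G$-skewfields are strongly graded), which both you and the paper prove correctly, so nothing downstream collapses; your refusal to claim the forward extraction was the right call, since no proof of it can exist at this level of generality.
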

\begin{proof}
If $b$ and $b'$ are $G$-inverses of $a$, then 
	\begin{equation*}
		b=b\textbf{t}(b)=b\textbf{s}(a)=bab'=\textbf{s}(a)b'=b'ab'=b'\textbf{t}(b')=b'
	\end{equation*}
which proves (1). (2) is obvious. Suppose $a\in R_{h}$ is homogeneous and let $a^{-1}=\sum_{g\in G} b_{g}$. For all $g\neq h^{-1}$ we have that $aa^{-1}=\textbf{s}(a)$ implies $ab_{g}=0$ and $a^{-1}a=\textbf{t}(a)$ implies $b_{g}a=0$. Therefore, $b_{h^{-1}}$ is a $G$-inverse and by (1) it must be unique. If $a$ is invertible with inverse $b$, then $1_{e}a\neq 0$ for all $e\in G_{0}$, which establishes that $\textbf{s}(a)=1$. Similarly, we find $\textbf{t}(a)=1$ and by symmetry the same holds for $b$. Consequently, $a$ and $b$ are each others $G$-inverses. To show (6), suppose that $R_{g}\cap R^{*}\neq\emptyset$ for all $g\in G$ and assume $gg'$ exists. Then we have \[R_{gg'}= R_{\textbf{s}(g)}R_{gg'}= R_{g}R_{g^{-1}}R_{gg'}\subseteq R_{g}R_{g'}\subseteq R_{gg'}\]so the grading is strong. If we assume the grading to be strong, then $R_{g}R_{g^{-1}}$ must contain $1_{\textbf{s}(g)}$ which implies that some element of $R_{g}$ is $G$-invertible.
\end{proof}

A (group) graded ring is called a (group) grade skewfield if the homogeneous elements form a group (cfr. \cite{Methods}). Similarly, we will call a $G$-graded ring a \emph{$G$-skewfield} if the homogeneous elements form a groupoid, in other words, if every homogeneous element is $G$-invertible. In view of the preceding proposition, $G$-skewfields are necessarily strongly graded.

\begin{ex}
For a (skew)field $k$, the groupoid ring $k[G]$ is a $G$-(skew)field. This means in particular that the matrix ring $M_{n}(k)$ is a $\Delta_{n}$-skewfield.
\end{ex}

\begin{prop}
If $Q$ is a $G$-skewfield, then
\begin{enumerate}[(1)]
	\item If $Q$ is a $G$-skewfield, then $Q_{e}$ is a skewfield for any idempotent $e$.
	\item If $Q$ is a $G$-skewfield then, for non-zero $h\in Q_{g},h'\in Q_{g'}$, $hh'=0$ if and only if $\textbf{t}(g)\neq\textbf{s}(g')$. 
\end{enumerate}
\end{prop}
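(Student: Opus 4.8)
The plan is to start with one preliminary observation that is then used twice. For a nonzero homogeneous $a\in Q_{g}$ I would first compute $\textbf{s}(a)$ and $\textbf{t}(a)$ explicitly. Since $1_{e}a$ lies in $Q_{e}Q_{g}$ and this is nonzero only when $eg$ is defined, i.e.\ only when $e=\textbf{s}(g)$, while $1_{\textbf{s}(g)}a=a\neq 0$ by the decomposition $1=\sum_{e}1_{e}$, the definition of $\textbf{s}$ yields $\textbf{s}(a)=1_{\textbf{s}(g)}$; symmetrically $\textbf{t}(a)=1_{\textbf{t}(g)}$. This identification of the source and target idempotents of a homogeneous element is the only computation with any substance, and both parts reduce to bookkeeping with it and with the $G$-inverse.

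For (1), I would take a nonzero $a\in Q_{e}$ with $e\in G_{0}$. As $e$ is idempotent we have $e^{-1}=e$, so Proposition 2.1(3) places the $G$-inverse $a^{-1}$ in $Q_{e^{-1}}=Q_{e}$. The preliminary observation gives $\textbf{s}(a)=\textbf{t}(a)=1_{e}$, so the defining relations of the $G$-inverse collapse to $aa^{-1}=a^{-1}a=1_{e}$. Since $1_{e}$ is the unit of the ring $Q_{e}$ (Proposition 1.3(1)) and $1_{e}\neq 0$, this exhibits $a^{-1}$ as a genuine two-sided inverse of $a$ inside $Q_{e}$, so every nonzero element of $Q_{e}$ is invertible and $Q_{e}$ is a skewfield.

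For (2), the implication from $\textbf{t}(g)\neq\textbf{s}(g')$ to $hh'=0$ is immediate: then $gg'$ is undefined, so $Q_{g}Q_{g'}=0$ by the grading axiom, whence $hh'=0$. The converse I would prove by contraposition, assuming $\textbf{t}(g)=\textbf{s}(g')$ and showing $hh'\neq 0$ for nonzero $h\in Q_{g}$, $h'\in Q_{g'}$. This is where $G$-invertibility is essential: letting $h^{-1}\in Q_{g^{-1}}$ be the $G$-inverse of $h$, the preliminary observation gives $h^{-1}h=\textbf{t}(h)=1_{\textbf{t}(g)}$. If $hh'$ vanished, then left-multiplying by $h^{-1}$ would give $1_{\textbf{t}(g)}h'=0$; but $\textbf{t}(g)=\textbf{s}(g')$ and $1_{\textbf{s}(g')}h'=h'$ (again by $1=\sum_{e}1_{e}$), forcing $h'=0$, a contradiction.

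The main obstacle --- indeed the only step that is not pure bookkeeping --- is this converse in (2): one cannot read $hh'\neq 0$ off the grading directly, and the trick is to cancel $h$ by its $G$-inverse so that the hypothetical relation $hh'=0$ becomes the manifestly false $1_{\textbf{s}(g')}h'=0$. Everything else, including the two support computations and the identity $1_{\textbf{s}(g')}h'=h'$, follows routinely from the grading axioms and the homogeneous decomposition of $1$.
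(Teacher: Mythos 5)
Your proof is correct and follows essentially the same route as the paper: part (1) via the fact that the $G$-inverse of $a\in Q_{e}$ lies in $Q_{e^{-1}}=Q_{e}$ and acts as a two-sided inverse for the unit $1_{e}$, and part (2) by cancelling with $h^{-1}$ to turn a hypothetical $hh'=0$ into the contradiction $h'=1_{\textbf{t}(h)}h'=0$. Your explicit preliminary computation that $\textbf{s}(a)=1_{\textbf{s}(g)}$ and $\textbf{t}(a)=1_{\textbf{t}(g)}$ for nonzero homogeneous $a\in Q_{g}$ merely spells out a detail the paper's terse proof leaves implicit.
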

\begin{proof}
Since for any $a\in Q_{g}$ we have $a^{-1}\in Q_{g^{-1}}$ (1) follows. Assume that (2) does not hold, then we can take non-zero $h,h'$ with $hh'$. Then $0=h^{-1}hh'=1_{\textbf{t}(h)}h'=h'$ which is a contradiction.
\end{proof}

\begin{prop}
A $G$-skewfield is $G$-simple in the sense that it has no non-trivial homogeneous ideals if and only if $G$ is connected.
\end{prop}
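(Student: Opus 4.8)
The plan is to reduce the statement to the classification of homogeneous ideals of a strongly graded ring proved earlier, combined with the fact that each component over an idempotent is a skewfield. Recall that a $G$-skewfield $Q$ is necessarily strongly graded, so the proposition describing the homogeneous ideals of a strongly $G$-graded ring applies verbatim: the homogeneous ideals of $Q$ are in bijection with tuples $(I_{e_{1}},\ldots,I_{e_{n}})$ of ideals $I_{e_{i}}\subseteq Q_{e_{i}}$, where $e_{1},\ldots,e_{n}$ are representatives of the connected components of $G$.

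First I would invoke part (1) of the preceding proposition, which tells us that each $Q_{e_{i}}$ is a skewfield. Hence the only ideals of $Q_{e_{i}}$ are $0$ and $Q_{e_{i}}$, so in the bijection above every coordinate $I_{e_{i}}$ is forced to be one of these two. A homogeneous ideal of $Q$ is therefore completely determined by the subset $S\subseteq\{1,\ldots,n\}$ of those indices $i$ for which $I_{e_{i}}=Q_{e_{i}}$. Under this correspondence the zero ideal is $S=\emptyset$ and the whole ring $Q$ is $S=\{1,\ldots,n\}$, while every other subset should yield a proper, non-zero homogeneous ideal.

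From here the equivalence falls out directly. If $G$ is connected then $n=1$, so the only admissible subsets are $\emptyset$ and $\{1\}$; the only homogeneous ideals are $0$ and $Q$, and $Q$ is $G$-simple. Conversely, if $G$ is disconnected then $n\geq 2$, and choosing for instance $S=\{1\}$ produces a homogeneous ideal distinct from both $0$ and $Q$, so $Q$ is not $G$-simple. Reading both implications contrapositively gives the claimed biconditional.

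The step that requires the most care — and the one I expect to be the main obstacle — is to check that the two extreme tuples really correspond to the trivial ideals and that any intermediate $S$ gives an ideal that is genuinely both proper and non-zero, rather than collapsing under the strong grading. For $i\in S$ the associated ideal contains $1_{e_{i}}$, so it is non-zero as soon as $S\neq\emptyset$; for $j\notin S$ the component $e_{j}$ contributes nothing, so $1_{e_{j}}$ lies outside this homogeneous ideal. Since $1=\sum_{e\in G_{0}}1_{e}$ decomposes into homogeneous pieces sitting in distinct components, and the ideal is homogeneous, $1\in I$ would force $1_{e}\in I$ for every $e$; hence whenever $S\neq\{1,\ldots,n\}$ the unit $1$ escapes $I$, and $I$ is proper. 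Confirming this is exactly what guarantees that the $2^{n}$ tuples produce $2^{n}$ distinct ideals, two of which are trivial.
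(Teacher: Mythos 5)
Your proof is correct, but it takes a different route from the paper. The paper argues directly and in an elementary way: for one direction it simply exhibits $\bigoplus_{g\in C}Q_{g}$, for $C$ a connected component, as a homogeneous ideal; for the other it uses $G$-invertibility to show that any homogeneous ideal containing a non-zero homogeneous element $a\in I\cap Q_{h}$ must contain $1_{e}$ for every idempotent $e$ in the connected component of $h$, so for connected $G$ the ideal is all of $Q$. You instead reduce to the earlier classification of homogeneous ideals of a strongly $G$-graded ring (legitimate, since the paper notes $G$-skewfields are strongly graded) and then use that each $Q_{e}$ is a skewfield to force every coordinate $I_{e_{i}}$ to be $0$ or $Q_{e_{i}}$. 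Your route buys strictly more than the proposition asks: it classifies \emph{all} homogeneous ideals of a $G$-skewfield as the $2^{n}$ ideals indexed by subsets of the connected components, with the lattice structure visible; the paper's argument is more self-contained and does not lean on the strong-grading machinery. Your care in checking that intermediate subsets $S$ give genuinely proper non-zero ideals (via $1_{e_{i}}\in I$ for $i\in S$, $1_{e_{j}}\notin I$ for $j\notin S$, using the standing assumption $1_{e}\neq 0$) is exactly right. One caveat, which your proof shares with the paper's own and so is not a gap relative to it: both arguments implicitly assume $Q$ has full support, i.e.\ $Q_{g}\neq 0$ for all $g\in G$ (the strong grading via $Q_{g}\cap Q^{*}\neq\emptyset$ needs this, and the paper's ideal $\bigoplus_{g\in C}Q_{g}$ is only proper and non-zero under the same hypothesis); the paper's subsequent example, where $\mathrm{supp}(Q)$ rather than $G$ is what must be connected, shows the statement should really be read with $G$ replaced by the support.
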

\begin{proof}
Clearly, $G$ being connected is a necessary condition for a $G$-skewfield to have no homogeneous ideals since $\bigoplus_{g\in C}Q_{g}$ for a connected component $C\subseteq G$ is always a homogeneous ideal. On the other hand, if every element is $G$-invertible, then an ideal $I$ with a homogeneous $a\in I\cap R_{h}$ necessarily contains all $1_{e}$ for $e\in G_{0}$ in the connected component of $h$, so a connected $G$-skewfield is $G$-simple.
\end{proof}

\begin{ex}
Let $k$ be a field and let $G$ be $\Delta_{2}\left[\Z/2\Z\right]$.
Then $Q=k[G]$ is an example of a $G$-skewfield for which $\mathrm{supp}(Q)$ is connected (so it is $G$-simple) but which is not simple. Indeed, $S_{(e_{11},0)}\simeq S_{(e_{22},0)}\simeq k[\Z/2\Z]$ and this ring contains non-trivial ideals.
\end{ex}

The first building block is firmly in place, now: $G$-graded skewfields will play the same role that fields play in classical valuation theory. The most important examples are of course the matrix rings, but there are more as the following construction shows. Suppose $k$ is a field and assume that a partial function $\alpha:G\times G\to k^{*}$ and a map $\sigma:G\to \mathrm{Aut}(k)$ have been given such that for all $a\in k$ and $f,g,h\in G$
\begin{enumerate}
	\item $\sigma(f)(\sigma(g)(a))=\alpha(f,g)\sigma(fg)(a)\alpha(f,g)^{-1}$,
	\item $\alpha(f,g)\alpha(fg,h)=\sigma(f)(\alpha(g,h))\alpha(f,gh)$,
	\item $\alpha(f,\textbf{t}(f))=1=\alpha(\textbf{s}(f),f)$,
	\item $\alpha(f,g)$ exists if $fg$ exists.
\end{enumerate}
Let $k[G,\alpha,\sigma]$ denote the free $k$-module with basis $G$ and define
a multiplication by demanding \[(ag)(bh)=\begin{cases}
	a\sigma(g)(b)\alpha(f,g)fg&\text{if $fg$ is defined}\\
	0&\text{otherwise}
\end{cases}\]
and distributivity.

\begin{prop}
This is indeed a $G$-skewfield and, if $G$ is connected, then every $G$-skewfield $Q$ is of this form.
\end{prop}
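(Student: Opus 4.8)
The statement splits into a construction half and a converse half, and I would argue them in turn.

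For the first half I would simply verify the $G$-skewfield axioms. Putting $R_{g}=kg$ gives $R=\bigoplus_{g\in G}R_{g}$ as abelian groups, and the multiplication rule (in its evident form $(af)(bg)=a\sigma(f)(b)\alpha(f,g)fg$) shows at once that $R_{g}R_{h}\subseteq R_{gh}$ when $gh$ exists and $R_{g}R_{h}=0$ otherwise, so we have a $G$-grading. The one laborious point is associativity: expanding $\big((af)(bg)\big)(ch)$ and $(af)\big((bg)(ch)\big)$ and comparing the coefficients of $fgh$, the automorphism factors are reconciled by condition (1) and the scalar factors by the cocycle identity (2), while both sides vanish off the locus where $fgh$ is defined. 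For a unit I would take $1=\sum_{e\in G_{0}}e$; condition (1) with $f=g=e$ combined with (3) first forces $\sigma(e)=\mathrm{id}$ for every idempotent $e$, and then (3) makes $\sum_{e}e$ a two-sided identity. Finally, a non-zero homogeneous element is an $af$ with $a\in k^{*}$, and taking $b=\sigma(f)^{-1}\big(a^{-1}\alpha(f,f^{-1})^{-1}\big)$ one checks $(af)(bf^{-1})=\textbf{s}(f)$ and $(bf^{-1})(af)=\textbf{t}(f)$, so every homogeneous element is $G$-invertible and $R$ is a $G$-skewfield (hence strongly graded).

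For the converse I would fix an idempotent $e_{0}\in G_{0}$ and set $k=Q_{e_{0}}$, a division ring by the first part of the proposition on $G$-skewfields. The plan is to exhibit $Q$ as a twisted groupoid ring over $k$, and connectedness of $G$ is precisely what makes this possible across the idempotents: for each $e\in G_{0}$ I choose a $G$-invertible element $w_{e}$ in a component joining $e_{0}$ to $e$ (with $w_{e_{0}}=1_{e_{0}}$), so that conjugation by $w_{e}$ yields ring isomorphisms $\phi_{e}\colon k\to Q_{e}$. I then pick a $G$-invertible $u_{g}\in Q_{g}$ for every $g\in G$, normalised so that $u_{e}=1_{e}$ on idempotents. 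Because $Q$ is a $G$-skewfield, each $Q_{g}$ is rank one, $Q_{g}=Q_{\textbf{s}(g)}u_{g}=u_{g}Q_{\textbf{t}(g)}$, so the elements $\phi_{\textbf{s}(g)}(a)u_{g}$ form a $k$-basis of $Q$ adapted to the grading.

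With this data in hand I would define $\sigma(g)\in\mathrm{Aut}(k)$ by $\sigma(g)(a)=\phi_{\textbf{s}(g)}^{-1}\big(u_{g}\,\phi_{\textbf{t}(g)}(a)\,u_{g}^{-1}\big)$ and, whenever $gh$ exists, read off $\alpha(g,h)\in k^{*}$ from $u_{g}u_{h}=\phi_{\textbf{s}(g)}(\alpha(g,h))\,u_{gh}$, which is legitimate since $u_{g}u_{h}$ and $u_{gh}$ both generate the rank-one module $Q_{gh}$. Conditions (1) and (2) then fall out of the identities $u_{g}(u_{h}au_{h}^{-1})u_{g}^{-1}=(u_{g}u_{h})a(u_{g}u_{h})^{-1}$ and $(u_{f}u_{g})u_{h}=u_{f}(u_{g}u_{h})$ respectively, condition (3) is the normalisation $u_{e}=1_{e}$, and (4) is inherent in the grading; the assignment $ag\mapsto\phi_{\textbf{s}(g)}(a)u_{g}$ then extends to the desired $G$-graded isomorphism $k[G,\alpha,\sigma]\to Q$. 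I expect the main obstacle to be the bookkeeping in this converse: arranging the choices of $w_{e}$ and $u_{g}$ coherently enough that the normalisation (3) really holds, and keeping the source/target conventions straight so each conjugation lands in the component it should. I would also flag the mild abuse that $k=Q_{e_{0}}$ is in general a skewfield rather than a field, which is exactly what renders the inner-automorphism term in condition (1) non-trivial.
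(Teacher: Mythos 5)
Your proposal is correct and takes essentially the same route as the paper: the same choice of $G$-invertible elements $u_{g}$ (normalised to $1_{e}$ on idempotents), the same $\sigma$ defined by conjugation transported through isomorphisms between the $Q_{e}$ (your concrete $\phi_{e}$ realise the paper's abstractly chosen $\iota_{e,e'}$), and the same cocycle $\alpha(f,g)=u_{f}u_{g}u_{fg}^{-1}$. The only difference is one of completeness: the paper declares the verifications ``straightforward but rather tedious'' and defers to the group-graded argument in the literature, whereas you actually carry them out --- including the first half, the tacit correction of the multiplication-rule typo, and the observation that $k=Q_{e_{0}}$ is in general only a skewfield --- all of which checks out.
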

\begin{proof}
To show the latter statement, notice first that, due to the connectedness of $G$, we have $Q_{e}\simeq Q_{e'}$ for any $e,e'$ idempotent. Choose such an isomorphism and call it $\iota_{e,e'}$. Take for any $g\in G$ a $G$-invertible $u_{g}\in Q_{g}$. We assume $u_{e}$ to be the identity of $Q_{e}$ for any $e\in G_{0}$. Define a map $\sigma:G\to\mathrm{Aut}(Q_{e})$ by putting \[\sigma(g):k\to k:a\mapsto \iota_{\textbf{s}(g),e}\left(u_{g}\iota_{e,\textbf{t}(g)}(a)u_{g}^{-1}\right)\]
and a partial function $\alpha:G\times G\to k^{*}$ by
\[\alpha(f,g)=\begin{cases}
	u_{f} u_{g}u_{fg}^{-1}&\text{if $fg$ is defined}\\
	\text{undefined}&\text{otherwise}
\end{cases}\]
To check that these functions satisfy the necessary conditions, that $Q$ is isomorphic to $Q_{e}[G,\alpha,\sigma]$, and that any $k[G,\alpha,\sigma]$ is indeed a $G$-skewfield, it suffices to sprinkle the phrase "if $fg$ is defined" liberally throughout the group-graded proof from \cite{Methods}. Since this is relatively straightforward but rather tedious we omit it here.
\end{proof}

\begin{ex}
Take a proper field extension $k\hookrightarrow k(\sqrt{a})$, then the $G$-skewfield \[Q=\begin{pmatrix}
	k&\sqrt{a}k\\
	\sqrt{a}k&k
\end{pmatrix}\]
is by Artin-Wedderburn isomorphic to $M_{2}(k)$ but, if both rings are endowed with their respective canonical $G$-gradings, not as a $G$-graded ring. This is an example of a (non-trivially) twisted groupoid ring. 
\end{ex}

\begin{prop}
A $G$-skewfield $Q$ is artinian if and only if all $S_{e}$ are artinian.
\end{prop}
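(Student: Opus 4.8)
The plan is to reduce the statement to the classical fact that a matrix ring $M_n(S)$ is (one-sidedly) artinian exactly when $S$ is, by exhibiting $Q$ as such a matrix ring over an isotropy component. Here I take $S_e$ to be the corner ring $1_e Q 1_e = \bigoplus_{\textbf{s}(g)=\textbf{t}(g)=e} Q_g$ at an idempotent $e$ (the isotropy component, as in the preceding example, where $S_{(e_{11},0)}\simeq k[\Z/2\Z]$).

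First I would dispose of the disconnected case. If $C_{1},\dots,C_{m}$ are the connected components of $G$ (finitely many, since $G_{0}$ is finite), then each $Q^{(i)}=\bigoplus_{g\in C_{i}}Q_{g}$ is a two-sided ideal, $Q_{g}Q_{g'}=0$ whenever $g,g'$ lie in distinct components, and the elements $z_{i}=\sum_{e\in C_{i}\cap G_{0}}1_{e}$ are central orthogonal idempotents with $\sum_{i}z_{i}=1$. Hence $Q\cong\prod_{i}Q^{(i)}$ as rings, and a finite product is artinian iff each factor is. Since every $S_{e}$ sits inside exactly one $Q^{(i)}$, it suffices to treat each factor on its own, so I may assume $G$ is connected.

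Now suppose $G$ is connected with $G_{0}=\{e_{1},\dots,e_{n}\}$. For each $i$ pick a connecting morphism $g_{i}$ from $e_{1}$ to $e_{i}$ and a $G$-invertible $u_{i}\in Q_{g_{i}}$, with $u_{1}=1_{e_{1}}$. Such a $u_{i}$ exists because $Q_{g_{i}}\neq 0$ — strong grading gives $Q_{g_{i}}Q_{g_{i}^{-1}}=Q_{\textbf{s}(g_{i})}\ni 1_{\textbf{s}(g_{i})}\neq 0$ — and every nonzero homogeneous element of a $G$-skewfield is $G$-invertible. Setting $E_{ij}=u_{i}^{-1}u_{j}$, the $\textbf{s}/\textbf{t}$-bookkeeping together with $u_{j}u_{j}^{-1}=1_{e_{1}}$ and $u_{j}u_{k}^{-1}=0$ for $j\neq k$ shows $E_{ij}E_{kl}=\delta_{jk}E_{il}$ and $\sum_{i}E_{ii}=\sum_{i}1_{e_{i}}=1$. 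Thus the $E_{ij}$ form a full system of $n^{2}$ matrix units, and the standard matrix-unit criterion gives a ring isomorphism $Q\cong M_{n}(E_{11}QE_{11})=M_{n}(S_{e_{1}})$.

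Finally, since $Q\cong M_{n}(S_{e_{1}})$, the ring $Q$ is left (resp. right) artinian iff $S_{e_{1}}$ is. As $G$ is connected, conjugation $x\mapsto u^{-1}xu$ by a $G$-invertible $u$ in a connecting component identifies $S_{e_{1}}$ with every $S_{e_{i}}$, so ``$S_{e_{1}}$ artinian'' is equivalent to ``all $S_{e}$ artinian''; together with the reduction above this finishes the proof. The main point to get right is the third paragraph: one must check that each product occurring in the definition and multiplication of the $E_{ij}$ is genuinely defined in the partial groupoid multiplication, which is precisely where $G$-invertibility and the relations for $\textbf{s}$, $\textbf{t}$ and $G$-inverses do the work. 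Everything else is the elementary product decomposition or the classical Morita-type transfer of the artinian condition along a matrix ring.
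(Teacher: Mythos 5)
Your proof is correct, but it takes a genuinely different route from the paper. The paper argues directly with chains: if some $S_{e}$ has an infinite descending chain of ideals, it manufactures a chain of ideals in $Q$ by the construction $I_{n}=\bigoplus_{\textbf{t}(g)=e=\textbf{s}(g')}h_{g}I_{e,n}h_{g'}\oplus\bigoplus_{g\notin C_{e}}Q_{g}$; conversely, given a strictly descending chain in $Q$, it truncates to corner chains $1_{e}I_{n}1_{e}$ in each $S_{e}$, uses finiteness of $G_{0}$ to stabilise them simultaneously, and derives a contradiction by moving a witness $1_{e}x1_{e'}$ back to $S_{e}$ with a connecting homogeneous element. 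You instead prove a structure theorem: after splitting $Q$ along the central orthogonal idempotents $z_{i}$ into the product over connected components, you build a full system of matrix units $E_{ij}=u_{i}^{-1}u_{j}$ from $G$-invertible elements in connecting components and conclude $Q\cong M_{n}(S_{e_{1}})$, then invoke the classical transfer of the artinian condition along matrix rings. Your $\textbf{s}/\textbf{t}$ bookkeeping for $E_{ij}E_{kl}=\delta_{jk}E_{il}$ and $\sum_{i}E_{ii}=1$ checks out, as does the conjugation isomorphism $S_{e_{1}}\simeq S_{e_{i}}$. What your approach buys is considerable: the isomorphism $Q\cong\prod_{i}M_{n_{i}}(S_{e^{(i)}})$ transfers \emph{any} Morita-invariant property, not just the artinian one, it settles both the one-sided and two-sided readings of ``artinian'' at once (the paper's chains of ``ideals'' are left ambiguous on this point), and it complements the paper's own crossed-product description $Q\simeq Q_{e}[G,\alpha,\sigma]$ of connected $G$-skewfields. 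What the paper's argument buys is self-containedness: no appeal to matrix-unit or Morita machinery. One caveat applies equally to both proofs: choosing a nonzero (hence $G$-invertible) $u_{i}\in Q_{g_{i}}$ in each connecting component silently assumes the support of $Q$ is all of $G$ on the relevant component; you justify this via the paper's assertion that $G$-skewfields are strongly graded, which is exactly the same implicit full-support assumption the paper makes when it picks its arbitrary nonzero $h_{g}$, so you are consistent with the paper's conventions, though a reader should note that the definition of a $G$-skewfield alone does not force $Q_{g}\neq 0$ for every $g$.
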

\begin{proof}
If some $S_{e}$ is not artinian, then there exists an infinite descending chain $I_{0,e}\supsetneq I_{1,e}\supsetneq\cdots$ of $S_{e}$-ideals. This induces a chain $I_{0}\supsetneq I_{1}\supsetneq\cdots$ of $Q$-ideals by putting $I_{n}=\bigoplus _{\textbf{t}(g)=e=\textbf{s}(g')}h_{g}I_{e,n}h_{g'}\oplus\bigoplus_{g\notin C_{e}}Q_{g}$ where $C_{e}$ is the connected component of $e$ and $h_{g}$ and $h_{g'}$ are arbitrary non-zero elements in $Q_{g}$ and $Q_{g'}$ respectively.\footnote{This is a slight abuse of notation, since the sum $\bigoplus _{\textbf{t}(g)=e=\textbf{s}(g')}h_{g}I_{e,n}h_{g'}$ is only direct up to repetitions.} 

Suppose on the other hand that all $S_{e}$ are artinian and that $I_{0}\supsetneq I_{1}\supsetneq\cdots$ is an infinite chain of descending ideals in $Q$. Then, for any $e\in G_{0}$, $1_{e}I_{0}1_{e}\supseteq 1_{e}I_{1}1_{e}\supseteq\cdots$ gives a descending chain of ideals in $S_{e}$. Such a chain must stop, so there is some $n$ with $1_{e}I_{n}1_{e}=1_{e}I_{n+1}1_{e}=\cdots$ for all $e\in G_{0}$. Take $x\in I_{n}\setminus I_{n+1}$, then $1_{e}x1_{e'}\notin I_ {n+1}$ for some $e,e'\in G_{0}$. In fact, these $e$ and $e'$ are in the same connected component, so $S_{e}\simeq S_{e'}$ and $1_{e}x1_{e'}h1_{e}\notin I_{e}$ where $h$ is an arbitrary homogeneous element connecting $e'$ and $e$.
\end{proof}

\section{$G$-valuation rings and $G$-valuations}

For the remainder of this section, we let $R$ be a $G$-graded subring of a $G$-skewfield $Q$. If for every homogeneous $h\in Q$ we have either $h\in R$ or $h^{-1}\in R$, then we say that $R$ is \emph{$G$-total}\index{\textit{G}-total}. This is the canonical generalisation of totality (as referred to in the introduction) and gives rise to somewhat similar results. Note that if $R$ is a $G$-total subring of the $G$-skewfield $Q$, then $R_{e}$ is a total subring of the skewfield $Q_{e}$ for any idempotent $e\in G$. This implies that any $G$-total subring of a $G$-skewfield contains $1_{e}$ for all idempotents $e$.

\begin{prop}\label{totord}
Suppose $R$ is $G$-total. If $I$ and $J$ are homogeneous left (resp. right) ideals, then $J_{g}\nsubseteq I_{g}$ implies $I_{g'}\subseteq J_{g'}$ for any $g'$ with the same right (resp. left) unit as $g$. In particular, we have $I_{g}\subseteq J_{g}$ or $J_{g}\subseteq I_{g}$.
\end{prop}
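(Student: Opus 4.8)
The plan is to reduce the whole statement to a single clean argument. I will treat the case of homogeneous left ideals, the right-ideal case being entirely symmetric (multiply on the opposite side and replace $\mathbf{t}$ by $\mathbf{s}$ throughout). Since a homogeneous ideal decomposes as $J=\bigoplus_{g}J_{g}$ with $J_{g}=J\cap R_{g}$, the hypothesis $J_{g}\nsubseteq I_{g}$ means there is a \emph{nonzero} homogeneous witness $x\in J_{g}\setminus I_{g}$; note $x\notin I$ entirely, since $x\in R_{g}$ would force $x\in I\cap R_{g}=I_{g}$. Fix now any $g'$ with $\mathbf{t}(g')=\mathbf{t}(g)$ (the ``same right unit'' condition) and an arbitrary $y\in I_{g'}$. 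If $y=0$ we are done, so assume $y\neq0$. The goal is to show $y\in J_{g'}$, which gives $I_{g'}\subseteq J_{g'}$.

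The key device is the element $h=yx^{-1}$. Both $x$ and $y$ are nonzero homogeneous elements of the $G$-skewfield $Q$, hence $G$-invertible, and by the proposition on $G$-inverses we have $x^{-1}\in Q_{g^{-1}}$ and $y^{-1}\in Q_{g'^{-1}}$. Since $\mathbf{s}(g^{-1})=\mathbf{t}(g)=\mathbf{t}(g')$, the product $yx^{-1}$ is defined and, by part (2) of the proposition on $G$-skewfields, nonzero; thus $h$ is a nonzero homogeneous element of $Q_{g'g^{-1}}$, with $h^{-1}=xy^{-1}$. Now I apply $G$-totality to $h$: either $h\in R$ or $h^{-1}\in R$.

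Suppose first $h=yx^{-1}\in R$. Using $x^{-1}x=\mathbf{t}(x)=1_{\mathbf{t}(g)}$ and $\mathbf{t}(g)=\mathbf{t}(g')$, I compute $hx=yx^{-1}x=y\,1_{\mathbf{t}(g')}=y$. Since $x\in J$ and $J$ is a left ideal while $h\in R$, this forces $y=hx\in J$, so $y\in J_{g'}$ as desired. Suppose instead $h^{-1}=xy^{-1}\in R$. Then, using $y^{-1}y=\mathbf{t}(y)=1_{\mathbf{t}(g')}=1_{\mathbf{t}(g)}$, I get $h^{-1}y=xy^{-1}y=x\,1_{\mathbf{t}(g)}=x$; but $y\in I$ and $I$ is a left ideal, so $x=h^{-1}y\in I$, contradicting $x\notin I$. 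Hence the second case is impossible and we always land in the first, proving $I_{g'}\subseteq J_{g'}$. The final ``in particular'' clause is the special case $g'=g$: if $J_{g}\nsubseteq I_{g}$ then $I_{g}\subseteq J_{g}$, so one of the two inclusions always holds.

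The only delicate point, and the one I would be most careful about, is the degree bookkeeping: one must verify that $yx^{-1}$ lands in the groupoid component $g'g^{-1}$ and is genuinely nonzero, and that the cancellation identities $x^{-1}x=1_{\mathbf{t}(g)}$ and $y^{-1}y=1_{\mathbf{t}(g')}$ produce exactly the idempotent $1_{\mathbf{t}(g)}=1_{\mathbf{t}(g')}$ that acts as a right identity on $y$ and $x$ respectively. This is precisely where the hypothesis $\mathbf{t}(g')=\mathbf{t}(g)$ is used, and it is what makes the product $hx$ collapse to $y$ rather than to $y$ times a foreign idempotent (which would be $0$). Once these source/target matchings are checked against the earlier propositions, the computation is routine and no further obstacle arises.
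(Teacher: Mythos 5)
Your proof is correct and takes essentially the same route as the paper's: the paper also picks a nonzero witness $h\in J_{g}\setminus I_{g}$, takes a nonzero $h'\in I_{g'}$, and applies $G$-totality to the pair $hh'^{-1}$, $h'h^{-1}$ (your $h^{-1}$ and $h$), concluding in the same two cases that $h'h^{-1}h\in J_{g'}$ while $hh'^{-1}h'\in I_{g}$ is impossible. The only differences are cosmetic (variable names, and your extra care with the degree bookkeeping $yx^{-1}\in Q_{g'g^{-1}}$, which the paper leaves implicit).
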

\begin{proof}
Suppose $I$ and $J$ are homogeneous and $J_{g}\nsubseteq I_{g}$, so there exists some non-zero $h\in J_{g}\setminus I_{g}$. Suppose $\textbf{t}(g')=\textbf{t}(g)$, and assume $h'\neq 0$ is in $I_{g'}$ (if no such $h'$ exists the claim is certainly true). This means that $hh'^{-1}$ and $h'h^{-1}$ are defined and at least one of these is in $R$. If $hh'^{-1}$ is in $R$, then $hh'^{-1}h'$ is in $I\cap R_{g}=I_{g}$ which is a contradiction, so $h'h^{-1}$ must be in $R$ and consequently $h'h^{-1}h$ is in $J\cap R_{g'}=J_{g'}$. The other case is similar.
\end{proof}

\begin{cor}
If $R$ is a $G$-total subring, then any left (resp. right) ideal generated by homogeneous elements $h_{1},...,h_{n}$ with the same target (resp. source) is cyclic.
\end{cor}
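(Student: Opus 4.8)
The plan is to show that a left ideal generated by finitely many homogeneous elements sharing a common target reduces to a principal (cyclic) ideal, by repeatedly collapsing two generators into one. The key observation is that Proposition \ref{totord} already does the essential work: among homogeneous elements, $G$-totality forces a linear comparison of the ideals they generate. I would first reduce to the two-generator case by induction on $n$, so the heart of the matter is: given non-zero homogeneous $h_{1}\in Q_{g_{1}}$ and $h_{2}\in Q_{g_{2}}$ with $\textbf{t}(g_{1})=\textbf{t}(g_{2})=e$, the left ideal $Rh_{1}+Rh_{2}$ equals $Rh_{1}$ or $Rh_{2}$.

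The main step is to compare the two principal ideals using the preceding proposition. Set $I=Rh_{1}$ and $J=Rh_{2}$; these are homogeneous left ideals. Since $h_{1}$ and $h_{2}$ have the same target $e$, the elements $h_{1}h_{2}^{-1}$ and $h_{2}h_{1}^{-1}$ are defined (their inner units match, because $h_{i}^{-1}\in Q_{g_{i}^{-1}}$ has source $e$), and $G$-totality of $R$ guarantees that at least one of them lies in $R$. If $h_{1}h_{2}^{-1}\in R$, then $h_{1}=(h_{1}h_{2}^{-1})h_{2}\in Rh_{2}$, so the generator $h_{1}$ is redundant and $Rh_{1}+Rh_{2}=Rh_{2}$; symmetrically, if $h_{2}h_{1}^{-1}\in R$ the ideal collapses to $Rh_{1}$. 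Either way two generators with a common target are replaced by a single one. One should check that the surviving generator still has target $e$, which it does since we merely keep one of the original $h_{i}$, so the inductive hypothesis applies to the reduced list $\{h_{1},h_{2}\}\cup\{h_{3},\dots,h_{n}\}$ of $n-1$ generators all with target $e$.

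The inductive engine is then clean: by the inductive hypothesis applied to $n-1$ generators, $Rh_{2}+Rh_{3}+\cdots+Rh_{n}$ is cyclic, say generated by a single homogeneous element $h$ with target $e$; then $Rh_{1}+Rh=Rh_{1}+Rh_{2}+\cdots+Rh_{n}$, and the two-generator case reduces this to a single homogeneous generator. The right-ideal statement is entirely dual, using elements of common source and the symmetric half of Proposition \ref{totord}, so it suffices to record that the argument transposes verbatim.

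The point I expect to require the most care is purely bookkeeping rather than conceptual: ensuring that the products $h_{1}h_{2}^{-1}$ and $h_{2}h_{1}^{-1}$ are genuinely defined and homogeneous of the right degree, so that Proposition \ref{totord} (or equivalently $G$-totality) can be invoked. This hinges on the earlier fact that the $G$-inverse of $h_{i}\in Q_{g_{i}}$ lives in $Q_{g_{i}^{-1}}$ together with the compatibility of sources and targets; since the common target hypothesis is exactly what makes these composites legal, the obstacle dissolves once the degrees are tracked correctly. No genuine difficulty beyond this indexing remains.
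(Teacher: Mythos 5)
Your proof is correct and follows essentially the same route as the paper, which leaves the corollary as an immediate consequence of Proposition~\ref{totord}: your two-generator collapse via the dichotomy ``$h_{1}h_{2}^{-1}\in R$ or $h_{2}h_{1}^{-1}\in R$'' is exactly the mechanism in the proof of that proposition, here applied to the principal ideals $Rh_{1}$ and $Rh_{2}$ and iterated by induction. The degree bookkeeping you flag is handled correctly (the common target $e$ makes $h_{1}h_{2}^{-1}$ and $h_{2}h_{1}^{-1}$ defined and non-zero, and the surviving generator retains target $e$), so nothing is missing.
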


If $G$ happens to be group, then the previous statements reduce to \textit{ideals are totally ordered} and \textit{finitely generated ideals are cyclic} respectively, both well-known results from (non-commutative) valuation theory which are generally not true in the $G$-graded case as the following example demonstrates.

\begin{ex}\label{Gvalex}
Let $k$ be a field and let $R_{v}$ be a valuation ring in $k$ with unique maximal ideal $\mathfrak{m}_{v}$. Consider the subring 
\[
	R=\begin{pmatrix}
		R_{v}&k\\
		0& R_{v}
	\end{pmatrix}
\] 
of the $\Delta_{2}$-skewfield $Q=M_{2}(k)$. $R$ is a $G$-total subring of $Q$, but the homogeneous ideals are not totally ordered since
\[
I=\begin{pmatrix}
		\mathfrak{m}_{v}&k\\
		0& R_{v}
	\end{pmatrix}\quad\text{and}\quad
	J=\begin{pmatrix}
		R_{v}&k\\
		0& \mathfrak{m}_{v}
	\end{pmatrix}
\]
are incomparable. Note that the fact that $I_{1,1}\subsetneq J_{1,1}$ and $J_{2,2}\subsetneq I_{2,2}$ implies  $I_{1,0}=J_{1,0}$ as well as $J_{1,0}=J_{0,1}$. Note also that the ideal generated by $e_{11}$ and $e_{22}$ is not cyclic.
\end{ex}

\begin{prop}\label{MaxId}
Let $R$ be $G$-total, and put $M$ the (homogeneous) ideal generated by the set of homogeneous elements which are not $G$-invertible in $R$. Then $R/M$ is a $G$-skewfield and $M$ is the maximal homogeneous ideal with the property that it contains no $1_{e}$ for $e\in G_{0}$. 
\end{prop}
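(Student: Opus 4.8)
The plan is to describe $M$ one homogeneous component at a time. For $g\in G$ let $N_{g}$ consist of $0$ together with every homogeneous $h\in R_{g}$ that is not $G$-invertible in $R$, and put $N=\bigoplus_{g\in G}N_{g}$. I will argue that $N$ is a two-sided homogeneous ideal and that $N=M$. Once this identification $M_{g}=N_{g}$ is available, all three assertions of the proposition follow with little effort, so the whole proof reduces to checking that $N$ is closed under multiplication by $R$ and under addition.

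The key lemma is a multiplicativity statement: if $x\in R_{g_{1}}$ and $y\in R_{g_{2}}$ are non-zero with $g_{1}g_{2}$ defined (so $xy\neq0$, since $Q$ is a $G$-skewfield), and if at least one of $x,y$ is not $G$-invertible in $R$, then $xy$ is not $G$-invertible in $R$. To prove it I assume $xy$ is $G$-invertible, so $(xy)^{-1}=y^{-1}x^{-1}$ lies in $R$; using $\textbf{s}(g_{2})=\textbf{t}(g_{1})$ one checks that $y\,(xy)^{-1}=x^{-1}$ and $(xy)^{-1}x=y^{-1}$, so both $x^{-1}$ and $y^{-1}$ would lie in $R$, against the assumption. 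This lemma gives at once that $N$ absorbs homogeneous multiplication on both sides, since a non-zero homogeneous product with a factor drawn from some $N_{g}$ still has a non-$G$-invertible factor.

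The additive closure of each $N_{g}$ is where $G$-totality is used. Given non-zero $h,h'\in N_{g}$ with $h+h'\neq0$, the element $h(h')^{-1}$ lies in $Q_{\textbf{s}(g)}$, so by totality either $h(h')^{-1}\in R$ or its $G$-inverse $h'h^{-1}\in R$. In the first case $h+h'=\bigl(h(h')^{-1}+1_{\textbf{s}(g)}\bigr)h'$ and in the second $h+h'=\bigl(1_{\textbf{s}(g)}+h'h^{-1}\bigr)h$; each exhibits $h+h'$ as a non-zero homogeneous product one of whose factors ($h'$, respectively $h$) is non-$G$-invertible, so the lemma puts $h+h'$ back in $N_{g}$. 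Hence $N$ is a homogeneous ideal whose homogeneous elements are exactly the non-$G$-invertible ones; it therefore contains every generator of $M$ and is generated by them, giving $N=M$. From $M_{g}=N_{g}$ the conclusions are immediate: each $1_{e}$ is its own $G$-inverse, hence $G$-invertible, so $1_{e}\notin M$; if a homogeneous ideal $P$ contains no $1_{e}$ then no homogeneous $h\in P$ can be $G$-invertible (else $1_{\textbf{s}(h)}=hh^{-1}\in P$), so $P\subseteq M$ and $M$ is the largest such ideal; and a non-zero homogeneous class in $R/M$ is represented by a $G$-invertible $h$, whose $G$-inverse $h^{-1}$ is again $G$-invertible and hence has non-zero class, so --- the quotient map being a graded homomorphism with $\overline{1_{e}}\neq0$ --- the class of $h^{-1}$ is a $G$-inverse of $\overline{h}$ and $R/M$ is a $G$-skewfield.

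I expect the additive-closure step to be the main obstacle, because it is the only point at which $G$-totality genuinely intervenes and it is precisely what rules out the a priori danger that sums and products of non-invertible homogeneous elements conspire to produce some idempotent $1_{e}$ inside $M$. Once the component-wise description $M=\bigoplus_{g}N_{g}$ is secured, the rest is formal.
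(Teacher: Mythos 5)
Your proof is correct, and it takes a genuinely different---and in one respect more careful---route than the paper's. The paper argues directly in the quotient: given a non-zero homogeneous class $\overline{x}\in R/M$, it writes $x=h+p$ with $h$ homogeneous in $R\setminus M$ and $p\in M$, observes that such an $h$ must be $G$-invertible in $R$ (otherwise it would be one of the generators of $M$), and computes $\overline{x}\,\overline{h^{-1}}=\overline{1_{\textbf{s}(h)}}$; for maximality it notes that any homogeneous $h\in M'\setminus M$ is $G$-invertible, so $hh^{-1}=1_{\textbf{s}(h)}\in M'$. You instead first prove the component-wise description $M_{g}=N_{g}$---homogeneous elements of $M$ are \emph{exactly} the non-$G$-invertible ones---via your multiplicativity lemma together with the totality-based additive-closure argument, and then deduce all three assertions formally. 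What your route buys is substantive: the paper's argument tacitly needs $1_{e}\notin M$ (equivalently, that $G$-invertible homogeneous elements stay outside $M$), both for the statement ``$M$ contains no $1_{e}$'' itself and for $\overline{1_{\textbf{s}(h)}}$ to be a non-zero unit in the quotient, yet it only ever shows that ideals \emph{strictly above} $M$ contain some $1_{e}$; the properness of $M$ is never verified there. Your additive-closure step---which you rightly flag as the crux, since sums of non-$G$-invertible elements of a fixed degree $g$ could a priori conspire to produce $1_{e}$---is precisely what closes that gap, and as a bonus it shows $M$ is the unique \emph{maximum}, not merely a maximal element, among homogeneous ideals avoiding all $1_{e}$ (any such ideal $P$ satisfies $P\subseteq M$). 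One small point to make explicit in a final write-up: in the additive step the factor $h(h')^{-1}+1_{\textbf{s}(g)}$ is itself homogeneous, since both summands lie in $Q_{\textbf{s}(g)}$, and this is what licenses applying your lemma, whose hypotheses are stated for homogeneous factors; with that noted, everything checks out.
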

\begin{proof}
If $\overline{x}\neq\overline{0}$ is some homogeneous element of $R/M$, then $x=h+p$ where $h$ is a non-zero homogeneous element of $R\setminus M$ and $p\in M$. Let $p=h_{1}+\cdots +h_{n}$ be the homogeneous decomposition of $p$. Then $h^{-1}$ is also in $R\setminus M$ and $\overline{x}\overline{h^{-1}}=\overline{hh^{-1}+ph^{-1}}=\overline{1_{\textbf{s}(h)}}=1_{\textbf{s}(\overline{x})}$ since $ph^{-1}$ must be in $M$ -- otherwise some $h_{i}h^{-1}$ is not in $M$ and we would have $hh_{i}^{-1}\in R\setminus M$ and consequently $h^{-1}hh_{i}^{-1}=h_{i}^{-1}\in R\setminus$, which is a contradiction. Analogously, we find $\overline{h^{-1}}\overline{x}=1_{\textbf{t}(\overline{x})}$ which implies that $R/M$ is a $G$-skewfield. If $M'$ is an ideal which contains $M$ strictly, then there is some homogeneous $h\in M'\setminus M$ so $h$ is $G$-invertible in $R$ and consequently $hh^{-1}$ is in $M$, which implies that $1_{e}\in M$ for some $e\in G_{0}$.
\end{proof}

$R$ will be called \emph{$G$-stable}\index{\textit{G}-stable} if $hR_{\textbf{t}(h)}h^{-1}= R_{\textbf{s}(h)}$ for any homogeneous $h$.  This implies that $R_{e}$ is stable for all $e\in G_{0}$. In particular, if $R$ is a $G$-total $G$-stable subring of the $G$-skewfield $Q$, then $R_{e}$ is a graded valuation ring in $Q_{e}$ for every $e\in G_{0}$.

\begin{prop}
Any homogeneous right (resp. left) ideal of $R$ is a left (resp. right) ideal if $R$ is $G$-stable.
\end{prop}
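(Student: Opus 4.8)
The plan is to imitate Schilling's classical argument, in which one-sidedness of an ideal in a stable (valuation) ring is upgraded to two-sidedness by conjugating with the invertible generator, now carried out component by component in the groupoid grading. First I would reduce to homogeneous data: since both $R$ and the right ideal $I$ are homogeneous and everything is additive, it suffices to show that $rx\in I$ for homogeneous $r\in R_{g}$ and homogeneous $x\in I\cap R_{g'}$. By the definition of a $G$-grading, $rx=0\in I$ whenever $\textbf{t}(g)\neq\textbf{s}(g')$, so I may assume $\textbf{t}(g)=\textbf{s}(g')$, i.e. that $gg'$ is defined and $rx\in R_{gg'}$; I may also assume $r$ and $x$ are non-zero.

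The core idea is to rewrite the left product $rx$ as a right product of an element of $I$ with an element of $R$, so that the right-ideal hypothesis $IR\subseteq I$ applies. Since $Q$ is a $G$-skewfield, the non-zero homogeneous $x$ is $G$-invertible with $x^{-1}\in Q_{g'^{-1}}$, and the model computation is the factorisation $rx=x\,(x^{-1}rx)$. This is exactly where $G$-stability should enter: by definition $h R_{\textbf{t}(h)}h^{-1}=R_{\textbf{s}(h)}$ for homogeneous $h$, so conjugation by a homogeneous $G$-invertible element sends the appropriate (diagonal) component of $R$ back into $R$. Granting that the conjugate lies in $R$, one concludes $rx\in xR\subseteq I$; the left/right dual statement then follows verbatim with the roles of $\textbf{s}$ and $\textbf{t}$ interchanged. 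I would also want to invoke the remark after the definition of $G$-stability, namely that each diagonal ring $R_{e}$ is stable, since that is the only place where the stability hypothesis is genuinely usable.

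The hard part — and the step I expect to be the real obstacle rather than a routine computation — is the groupoid bookkeeping in this factorisation. In the group-graded case every homogeneous element shares a single source and target, so conjugation is always defined; here $x^{-1}r$ is non-zero only when $\textbf{t}(x^{-1})=\textbf{s}(r)$, i.e. $\textbf{s}(g')=\textbf{s}(g)$, which need not hold, so the naive conjugate $x^{-1}rx$ can vanish even though $rx$ does not. The difficulty is that $rx$ has source $\textbf{s}(g)$ while $x$ has source $\textbf{s}(g')$, and right-multiplying $x$ by elements of $R$ cannot change its source; thus to realise $rx$ inside $I$ one must first produce elements of $I$ in the correct diagonal block. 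I would therefore track the homogeneous degrees explicitly rather than conjugate blindly, inserting the idempotents $1_{e}$ and a $G$-invertible connecting element so as to descend to a single connected component where $R_{e}$ is stable, and then transport the resulting relation back along the connecting element. Making this degree-matching work out cleanly is where the essential content lies, and — in view of the failure of total ordering already exhibited in Example \ref{Gvalex} — it is also where I would watch most carefully for any implicit hypothesis on how the off-diagonal components $R_{g}$ are balanced.
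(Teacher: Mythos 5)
Your plan is the paper's own plan --- factor $rx$ as $x\,(x^{-1}rx)$ and invoke $G$-stability --- and your degree bookkeeping is exactly right: the conjugate $x^{-1}rx$ is nonzero only when $\textbf{s}(g)=\textbf{s}(g')$, i.e.\ when the degree of $r$ is a loop at the source of $x$. But the repair you postpone (inserting idempotents and a connecting element, then transporting the relation back) cannot be carried out, because in the case you isolated the statement is simply false. Take $R=M_{2}(R_{v})$, which the paper exhibits as a $\Delta_{2}$-valuation ring (so in particular $G$-stable), and let $I=e_{22}R$ be the homogeneous right ideal of matrices supported on the bottom row. Then $e_{12}\in R$ and $e_{12}e_{22}=e_{12}\notin I$, so $I$ is not a left ideal; the ring of Example \ref{Gvalex} gives the same failure with the same $I$. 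As you observed, right multiplication never changes the source of a homogeneous element, so a right ideal generated in one ``row'' can never absorb left multiplications that move it into another row --- precisely the imbalance of off-diagonal components you said you would watch for. No hypothesis is hiding that fixes this: it is the ordinary row-ideal phenomenon in matrix rings.

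You should also know that the paper's own proof founders at exactly this point. It writes $h\in I\cap Q_{g}$, $r\in R\cap Q_{g'}$, and when $h^{-1}r$ does not exist concludes ``$hh^{-1}rh=0$ so it is again in $I$.'' But $hh^{-1}rh=1_{\textbf{s}(h)}rh$ equals $rh$ only when $\textbf{s}(r)=\textbf{s}(h)$, i.e.\ precisely when the conjugate exists; in the remaining case $hh^{-1}rh=0$ while $rh\neq0$ (in the counterexample $rh=e_{12}$), so membership of $0$ in $I$ proves nothing about $rh$. What your argument --- and the paper's --- does establish is the composable case: if $\textbf{s}(r)=\textbf{s}(x)$, then $x^{-1}rx\in R$ by stability and $rx=x(x^{-1}rx)\in IR\subseteq I$, so homogeneous right ideals are closed under left multiplication by the corner rings $1_{e}R1_{e}$. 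Even there, note the mismatch you half-flagged: the definition of $G$-stability only speaks of the idempotent components $R_{e}$, so when $G$ has non-idempotent loops (e.g.\ $G=(\Z/2\Z)^{2}[\Delta_{n}]$) one must strengthen it to conjugation-invariance of the full corners $1_{e}R1_{e}$, as the paper tacitly does. In short, your instinct that the degree-matching step is the essential content was correct --- it is where the proposition as stated breaks down and must be weakened or given an extra hypothesis.
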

\begin{proof}
Let $I$ be a right $G$-ideal of $R$, let $h\in I\cap Q_{g}$ be homogeneous and pick $r\in R\cap Q_{g'}$ arbitrary. If $g'g$ does not exist $rh=0\in I$ follows, so suppose $g'g$ does exist. Because of $G$-stability of $R$, $h^{-1}rh$ is in $R$ -- whether it is zero or not. If $h^{-1}r$ exists, we have $hh^{-1}rh=rh\in I$ since $I$ is a right $G$-ideal. If $h^{-1}r$ does not exist, $hh^{-1}rh=0$ so it is again in $I$. This proves the claim for right $G$-ideals; the reasoning for left $G$-ideals is similar.
\end{proof}

\begin{dfn}
If $R$ is $G$-total and $G$-stable, we call it a \emph{$G$-valuation} ring.
\end{dfn}

With that, the second important concept is in place. Next on the menu are the $G$-valuation functions but, as a small intermezzo, we will consider some examples first: if $R_{v}$ is a valuation ring in a skewfield $D$, then $M_{n}(R_{v})$ is a $\Delta_{n}$-valuation ring in the $\Delta_{n}$-skewfield $M_{n}(D)$. This already yields a vast class of examples of $G$-valuation rings but there are many more, like example \ref{Gvalex} or the following example.

\begin{ex}
Consider the rational Hamilton quaternions $\Hq(\Q)$. There is a natural $(\Z/2\Z)^{2}$-grading on $\Hq(\Q)$ and consequently $M_{n}(\Hq(\Q))$ is a $(\Z/2\Z)^{2}[\Delta_{n}]$-skewfield. Any $p$-valuation on $\Q$ extends to a graded valuation ring $R=\Z_{p}\oplus \Z_{p} i\oplus \Z_{p} j\oplus \Z_{p} k$ in $\Hq(\Q)$. $M_{n}(R)$ is then a $(\Z/2\Z)^{2}[\Delta_{n}]$-valuation ring on $M_{n}(\Hq(\Q))$. 
\end{ex}

We say that a groupoid $G$ is \emph{partially ordered} by some partial order relation $\leq$ if $g\leq g'$ implies $hg\leq hg'$ and $gh\leq g'h$ when the multiplications are defined. We will say that $G$ is \emph{ordered} if every $g\in G$ is comparable to $\textbf{s}(g)$ and $\textbf{t}(g)$. If $G$ is a group, ordered in this sense is the same as totally ordered.

\begin{dfn}
Let $G$ be a groupoid and let $Q$ be a $G$-skewfield. A \emph{$G$-valuation} on $Q$ is a surjective map $v:Q\to\Gamma\cup\left\{\infty\right\}$ for some ordered groupoid $\Gamma$ (with, as usual, $\infty>\gamma$ and $\infty\gamma=\gamma\infty=\infty$ for all $\gamma\in\Gamma$) satisfying:
\begin{enumerate}
	\item $v(x)=\infty\Leftrightarrow x=0$,
	\item $v(x+y)\geq v(z)$ if $v(y)\geq v(z)\leq v(x)$,
	\item $v(hh')=v(h)v(h')$ for $h\in Q_{g},h'\in Q_{g'}$ if $gg'$ is defined.
	\setcounter{enumcounter}{\value{enumi}}
\end{enumerate}
\end{dfn}

We have all the components now, but we still have to make sure everything fits smoothly together. If $v:Q\to\Gamma\cup\left\{\infty\right\}$ is a $G$-valuation, then we let $T_{v}$ be the ring generated by homogeneous elements $h$ with $v(h)\geq v(\textbf{t}(h))$. Note that, since $G_{0}$ is a finite set, $1\in T_{v}$ follows. Since $T_{v}$ is generated by homogeneous elements, it inherits the $G$-grading from $Q$. 

\begin{prop}\label{G-st&G-t}
For any $G$-valuation $v:Q\to\Gamma\cup\left\{\infty\right\}$ the ring $T_{v}$ is $G$-stable and $G$-total.
\end{prop}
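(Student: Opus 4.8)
The plan is to first record how the valuation $v$ interacts with the groupoid structure of $\Gamma$, and then to read off both properties from an explicit description of the homogeneous components of $T_{v}$. The key compatibility facts are the following. Applying axiom (3) to $1_{e}1_{e}=1_{e}$ for $e\in G_{0}$ shows that $v(1_{e})$ is idempotent in $\Gamma$, hence lies in $\Gamma_{0}$. For non-zero homogeneous $h\in Q_{g}$, the identities $1_{\textbf{s}(g)}h=h=h1_{\textbf{t}(g)}$ together with axiom (3) force $v(1_{\textbf{s}(g)})=\textbf{s}(v(h))$ and $v(1_{\textbf{t}(g)})=\textbf{t}(v(h))$, while applying axiom (3) to $hh^{-1}=1_{\textbf{s}(g)}$ and $h^{-1}h=1_{\textbf{t}(g)}$ gives $v(h^{-1})=v(h)^{-1}$. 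In particular the defining condition $v(h)\geq v(\textbf{t}(h))$ for a generator of $T_{v}$ reads $v(h)\geq\textbf{t}(v(h))$.

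Next I would identify the components of $T_{v}$, the claim being that for every $g\in G$,
\[
	(T_{v})_{g}=\left\{x\in Q_{g}\mid v(x)\geq v(1_{\textbf{t}(g)})\right\}\cup\{0\}.
\]
The inclusion $\supseteq$ is immediate since each such non-zero $x$ is a generator. For $\subseteq$ I would show that the right-hand sides assemble into a ring containing the generators: additive closure within a fixed $Q_{g}$ follows from axiom (2), since two elements of $Q_{g}$ share the target value $v(1_{\textbf{t}(g)})$ and the value of a sum dominates it; multiplicative closure follows from the order axioms, since for $x\in(T_{v})_{g}$, $y\in(T_{v})_{g'}$ with $gg'$ defined one has $v(x)\geq\textbf{t}(v(x))=\textbf{s}(v(y))$, and right-multiplying this comparison by $v(y)$ gives $v(xy)=v(x)v(y)\geq\textbf{s}(v(y))v(y)=v(y)\geq\textbf{t}(v(y))=\textbf{t}(v(xy))$. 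As $T_{v}$ is the smallest ring containing the generators, this forces the stated equality.

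With this description in hand both properties are short. For $G$-totality, orderedness of $\Gamma$ makes $v(h)$ comparable to $\textbf{t}(v(h))$ for any non-zero homogeneous $h$; if $v(h)\geq\textbf{t}(v(h))$ then $h\in T_{v}$, and otherwise $v(h)\leq\textbf{t}(v(h))$, whence right-multiplying by $v(h)^{-1}$ yields $v(h^{-1})=v(h)^{-1}\geq\textbf{t}(v(h))v(h)^{-1}=\textbf{s}(v(h))=\textbf{t}(v(h^{-1}))$, so $h^{-1}\in T_{v}$. For $G$-stability, take non-zero homogeneous $h\in Q_{g}$ and $x\in(T_{v})_{\textbf{t}(g)}$, so that $v(x)\geq\textbf{t}(v(h))$; left-multiplying by $v(h)$ and then right-multiplying by $v(h)^{-1}$ gives $v(hxh^{-1})\geq v(h)v(h)^{-1}=\textbf{s}(v(h))=v(1_{\textbf{s}(g)})$, whence $h(T_{v})_{\textbf{t}(g)}h^{-1}\subseteq(T_{v})_{\textbf{s}(g)}$. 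Applying the same inclusion to $h^{-1}$ and conjugating back yields the reverse inclusion, hence equality.

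The step I expect to be delicate is the manipulation of comparisons in $\Gamma$: because $\Gamma$ is only a groupoid, a comparison cannot be multiplied on an arbitrary side, as the products of the two sides by a given element may simply be undefined. The whole argument therefore hinges on choosing, at each step, the side on which both resulting products exist, which is why the comparisons are consistently routed through the source and target idempotents $\textbf{s}(v(h))$ and $\textbf{t}(v(h))$ rather than multiplied out blindly.
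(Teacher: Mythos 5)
Your proof is correct and its two closing computations are, modulo notation, exactly the paper's: $G$-totality comes from comparability of $v(h)$ with $v(\textbf{t}(h))=\textbf{t}(v(h))$ (orderedness of $\Gamma$) followed by multiplication with $v(h)^{-1}$, and $G$-stability from the estimate $v(hxh^{-1})\geq v(h)v(1_{\textbf{t}(h)})v(h^{-1})=v(1_{\textbf{s}(h)})$ together with the same trick of applying the inclusion to $h^{-1}$ to get equality. The genuine difference is your explicit identification of the components, $(T_{v})_{g}=\left\{x\in Q_{g}\mid v(x)\geq v(1_{\textbf{t}(g)})\right\}$. The paper never proves this: its stability step starts from ``suppose $r\in T_{\textbf{t}(g)}$'' and immediately uses $v(r)\geq v(1_{\textbf{t}(g)})$, which is not automatic from the definition of $T_{v}$ as the ring \emph{generated} by such homogeneous elements --- a priori a component consists of sums of products of generators, and one must check the value condition survives those operations. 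Your closure argument (axiom (2) for sums within a fixed degree, the order compatibilities routed through $\textbf{s}(v(y))$ and $\textbf{t}(v(x))$ for products) is precisely what legitimises the paper's terse step, so your write-up is more complete, at the cost of length. Two local repairs are needed. First, in the totality chain the inequality is misattached: from $v(h)\leq\textbf{t}(v(h))$, right multiplication by $v(h)^{-1}$ gives $v(h^{-1})=\textbf{t}(v(h))v(h)^{-1}\geq v(h)v(h)^{-1}=\textbf{s}(v(h))=\textbf{t}(v(h^{-1}))$; as you wrote it, the middle equality $\textbf{t}(v(h))v(h)^{-1}=\textbf{s}(v(h))$ is false in general, since that product equals $v(h)^{-1}$ itself (this matches the paper's own chain, just with the comparison placed between the correct terms). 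Second, for your candidate component set to assemble into a \emph{ring} you also need closure under negation, i.e. $v(-x)=v(x)$; this follows because $v(-1_{e})^{2}=v(1_{e})$, and comparability of $v(-1_{e})$ with its target $v(1_{e})$ plus compatibility of the order with multiplication forces $v(-1_{e})=v(1_{e})$. Both fixes are routine; the architecture of your proof is sound and, if anything, closes a small gap the paper leaves implicit.
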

\begin{proof}
Suppose $h$ is a homogeneous element of $Q$ and suppose $v(h)<v(\textbf{t}(h))$. Then 
\begin{align*}
	v(h^{-1})=v(\textbf{t}(h))v(h^{-1})&>v(h)v(h^{-1})=v(\textbf{s}(h))=v(\textbf{t}(h^{-1}))
\end{align*}
showing that $h^{-1}\in T_{v}$. To show $G$-stability, pick some homogeneous element $h\in Q_{g}$ and suppose that $r\in T_{\textbf{t}(g)}$, then $v(hrh^{-1})\geq v(h)v(1_{\textbf{t}(h)})v(h^{-1})=v(1_{\textbf{s}(h)})$. Since $\textbf{s}(h)$ is the target of $hrh^{-1}$, this shows that $hrh^{-1}\in T\cap Q_{\textbf{s}(h)}=T_{\textbf{s}(h)}$. Similarly, if $r'$ is in $T_{\textbf{s}(g)}$, it follows that $h^{-1}r'h$ is in $T_{\textbf{t}(h)}$, so $r'\in hT_{\textbf{t}(h)}h^{-1}$ which establishes the $G$-stability of $T_{v}$.
\end{proof}

Obviously, one can define another ring, $S_{v}$ say, as the ring generated by homogeneous elements with $v(h)\geq v(\textbf{s}(h))$. Mutatis mutandis, proposition \ref{G-st&G-t} can be proven for $S_{v}$ instead of $T_{v}$. 

\begin{ex}
The groupoid $\Delta_{2}$ can be ordered by letting $e_{11}$ be the maximum and $e_{22}$ the minimum of $\Delta_{2}$, the elements $e_{12}$ and $e_{21}$ remaining incomparable. Using this, we can define an ordering on the groupoid $G$ of non-zero elements of $\Z[\Delta_{2}]$ by putting 
\[\sum_{\delta\in \Delta_{2}}a_{\delta}\delta\geq\sum_{\delta\in \Delta_{2}}b_{\delta}\delta\quad\text{ if, for all $\delta\in\Delta_{2}$,}\quad
a_{\delta}\geq b_{\delta}\text{ or }\exists \delta'>\delta:a_{\delta'}\geq b_{\delta'}.\]
If $v$ is a discrete valuation on a field $k$, then \[v:M_{2}(k)\to G\cup\left\{\infty\right\}:\sum_{\delta\in \Delta_{2}}m_{\delta}\mapsto \sum_{\substack{\nexists \delta'<\delta\\ m_{\delta'}\neq0}}v(m_{\delta})e_{\delta}\] is a groupoid valuation. In this case, we have \[T_{v}=\begin{pmatrix}R_{v}&k\\0&R_{v}\end{pmatrix}\quad\text{while}\quad S_{v}=\begin{pmatrix}R_{v}&0\\k&R_{v}\end{pmatrix}.\]
\end{ex}

We will now show that, although $T_{v}$ and $S_{v}$ might be different, there exists for any $G$-valuation ring $R$ some $G$-valuation $v$ with $T_{v}=R=S_{v}$. First, we briefly remind the reader how quotients of groupoids can be defined. Suppose $F$ is subgroupoid of $G$ containing all idempotents and such that $gF_{\textbf{t}(g)}g^{-1}=F_{\textbf{s}(g)}$ for all $g$, then one can construct a factor groupoid $G/F=G/\sim$ where
\begin{equation*}
	g\sim h\quad\Leftrightarrow\quad \exists f_{s},f_{t}\in F: g=f_{s}hf_{t}.
\end{equation*}
It can easily be verified that this is an equivalence relation which is compatible with the multiplication on $G$, so there is a canonical induced multiplication on $G/F$.

\begin{prop}\label{Gval}
For any $G$-stable, $G$-total subring $R$ there is some ordered groupoid $\Gamma$ and some partial $G$-valuation $v:Q\to\Gamma\cup\left\{\infty\right\}$ with $T_{v}=R=S_{v}$.
\end{prop}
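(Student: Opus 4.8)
The plan is to mimic the classical construction of the value group as a quotient by the units of the valuation ring, replacing the group of units by the \emph{groupoid} of homogeneous $G$-units of $R$ and arranging that $G$-totality and $G$-stability deliver, respectively, the order and its well-definedness.

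First I would assemble the value groupoid. Let $\mathcal{H}=\bigcup_{g\in G}(Q_{g}\setminus\{0\})$ be the set of non-zero homogeneous elements of $Q$; since $Q$ is a $G$-skewfield this is a groupoid under the partial multiplication inherited from $Q$, with $\textbf{s},\textbf{t}$ as in $Q$. Inside it sits $F=\{h\in\mathcal{H}\mid h\in R\text{ and }h^{-1}\in R\}$, the homogeneous elements that are $G$-units of $R$; this $F$ is a subgroupoid containing every $1_{e}$. The crucial point is that $F$ satisfies the hypothesis $hF_{\textbf{t}(h)}h^{-1}=F_{\textbf{s}(h)}$ needed to form a factor groupoid: this is exactly $G$-stability ($hR_{\textbf{t}(h)}h^{-1}=R_{\textbf{s}(h)}$) applied to units, since conjugation by $h$ carries units of $R$ to units of $R$. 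I would then set $\Gamma=\mathcal{H}/F$ and let $v$ restrict on $\mathcal{H}$ to the canonical projection $\mathcal{H}\to\Gamma$, declaring $v(0)=\infty$. With this choice axiom (3) is automatic, as the projection is a homomorphism of groupoids, and surjectivity onto $\Gamma$ is built in.

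Next I would put the order on $\Gamma$. The positive cone is the image under $v$ of the homogeneous elements of $R$; concretely I would declare $\overline{h}\geq\textbf{t}(\overline{h})$ precisely when $h\in R$ and extend this bi-invariantly. Three things must be checked: that the relation is well defined on $F$-classes (using $F\subseteq R$ and $G$-stability), that it is compatible with the partial multiplication in the sense required of a partially ordered groupoid (again $G$-stability, which also forces the left/right symmetry that will give $S_{v}=T_{v}$), and that every element is comparable to its source and target --- this last is where $G$-totality enters, since $h\in R$ or $h^{-1}\in R$ forces $\overline{h}$ above or below the relevant idempotent. Granting this, $\Gamma$ is an ordered groupoid and, by the very definition of the cone, a homogeneous $h$ satisfies $v(h)\geq v(\textbf{t}(h))$ iff $h\in R$ and $v(h)\geq v(\textbf{s}(h))$ iff $h\in R$; since $R$ is generated by its homogeneous elements this yields $T_{v}=R=S_{v}$.

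The remaining and, I expect, hardest part is to make $v$ a genuine (partial) $G$-valuation on all of $Q$, i.e. to define $v$ on non-homogeneous elements and verify the ultrametric axiom (2). The subtlety is that $\Gamma=\mathcal{H}/F$ only houses the values of homogeneous elements, so, exactly as in the matrix example preceding this proposition, I would read off $v(x)$ for $x=\sum_{g}x_{g}$ from the homogeneous components that are minimal with respect to the order (enlarging $\Gamma$ to carry the necessary formal combinations if one insists on a total map, which is what \emph{partial} is meant to sidestep). Verifying axiom (2), that $v(x+y)\geq v(z)$ whenever $v(x),v(y)\geq v(z)$, then reduces component by component to the fact that each $R_{e}$ is an honest graded valuation ring in $Q_{e}$ and hence that $v$ restricted to each idempotent component is ultrametric; the work lies in checking that the chosen reading-off by minimal support is consistent with the order on $\Gamma$ across incomparable components. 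This bookkeeping, rather than any single conceptual difficulty, is the main obstacle.
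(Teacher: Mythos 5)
Your construction is essentially the paper's own proof: the paper forms the quotient $\Omega=H(Q)^{*}/H(R)^{*}$ (your $\mathcal{H}/F$, legitimised by $G$-stability exactly as you say), orders it by $\overline{x}\geq\overline{y}\Leftrightarrow\exists r_{s},r_{t}\in H(R):x=r_{s}yr_{t}$ (your bi-invariantly extended cone, with $G$-totality supplying comparability to sources and targets), and then carries out precisely the enlargement you sketch at the end, taking $\Gamma$ to be the non-zero elements of $\Omega[\overline{G}]$ for a suitable quotient groupoid $\overline{G}$ of $G$ and defining $v$ on a general sum via its order-minimal homogeneous components, as in the matrix example preceding the proposition. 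The deferred ``bookkeeping'' is resolved in the paper just as you anticipate, the only small correction being that the ultrametric axiom (2) rests on comparability within each single $Q_{g}$ for arbitrary $g$ (from $G$-totality applied to $hh'^{-1}$, giving $\overline{h+h'}\geq\min\left\{\overline{h},\overline{h'}\right\}$ in $\Omega$), not merely on the idempotent components $R_{e}\subseteq Q_{e}$.
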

\begin{proof}
$H(Q)^{*}$ is a groupoid for the multiplication and $H(R)^{*}$ is a subgroupoid containing all $1_{e}$ for $e\in G_{0}$, which are exactly the idempotents of $H(Q)^{*}$. Moreover, because of the $G$-stability of $R$, we have $hH(R)^{*}_{\textbf{t}(h)}h^{-1}=H(R)^{*}_{\textbf{s}(h)}$ for all $h\in H(Q)^{*}$.  Denote the quotient groupoid $H(Q)^{*}/H(R)^{*}$ by $\Omega$. We can define an ordering on $\Omega$ by 
\[
	\overline{x}\geq\overline{y}\Leftrightarrow\exists r_{s},r_{t}\in H(R): x=r_{s}yr_{t}.
\]
It is a standard verification that this is a well-defined partial order relation relation on $\Omega$. Pick $\omega=\overline{q}$ in $\Omega$ for some $q\in H(Q)^{*}$. If $q\in H(R)$, then $\omega=\overline{q}\overline{1_{\textbf{t}(q)}}$ and $\overline{1_{\textbf{t}(q)}}=\textbf{t}(\omega)$ so $\omega$ is comparable to its target. If $q\notin H(R)$, then $q^{-1}\in H(R)$ because of the $G$-totality of $R$. Therefore, $\textbf{t}(\omega)=\overline{q^{-1}}\overline{q}$ so $\omega$ is again comparable to its target. Of course, a similar argument holds for sources instead of targets. Suppose now $\chi,\psi,\omega\in\Omega$ such that $\psi\leq\omega$ and both $\chi\psi$ and $\chi\omega$ are defined. We have $x,y,z\in H(Q)^{*}$ and $r_{s},r_{t}\in H(R)\setminus\left\{0\right\}$ with $\overline{x}=\chi,\overline{y}=\psi,\overline{z}=\omega$ and $r_{s}yr_{t}=z$. Clearly, $\psi$ and $\omega$ have the same source, so we can assume $y$ and $z$ to have equal source as well. Therefore $\textbf{t}(r_{s})=\textbf{s}(r_{s})$ holds and, by the $G$-stability of $R$, we have $yR_{\textbf{t}(y)}y^{-1}=R_{\textbf{s}(y)}$ so there is some $r_{t}'\in R_{\textbf{t}(y)}$ with $r_{s}y=yr_{t}'$. Thus $xz=xyr_{t}'r_{t}$ hence \[\chi\psi=\overline{xy}\leq\overline{xz}=\chi\omega.\]The other compatibilities can be checked in a similar fashion. This means $\Omega$ is ordered. Note that if $h$ and $h'$ are in the same $Q_{g}$ they must be comparable and $\overline{h+h'}\geq\min\left\{\overline{h},\overline{h'}\right\}$.

Let $g\sim g'$ if for any $h_{g'}\in Q_{g'}^{*}$ there are $h_{g},h_{g}'\in Q_{g}^{*}$ with $\overline{h_{g}}\leq\overline{h_{g'}}\leq\overline{h_{g}'}$. This is an equivalence relation compatible with multiplication, so $\overline{G}=G/\sim$ is a groupoid which can be ordered by putting for all $\overline{g},\overline{g'}\in \overline{G}$, $\overline{g}<\overline{g'}$ if and only if $\overline{h}< \overline{h'}$ for all non-zero $h\in R_{g}$, $h'\in R_{g'}$. Set $\Gamma$ the groupoid of non-zero elements of $\Omega[\overline{G}]$ ordered by \[\sum_{\overline{g}\in \overline{G}}a_{\overline{g}}\overline{g}\geq\sum_{\overline{g}\in \overline{g}}b_{\overline{g}}\overline{g}\quad\text{ if, for all $\overline{g}\in\overline{G}$,}\quad
a_{\overline{g}}\geq b_{\overline{g}}\text{ or }\exists \overline{g'}>\overline{g}:a_{\overline{g'}}\geq b_{\overline{g'}}.\]
Define \[v:Q\to\Gamma\cup\left\{\infty\right\}:\sum_{g\in G}m_{g}\mapsto \sum_{\substack{\nexists \overline{g'}<\overline{g}\\m_{\overline{g'}}\neq0}}\min\left\{\overline{m_{g}}\mid g\in\overline{g}\right\}\overline{g}.\]
This will be our $G$-valuation.

It is clear that $v\left(\sum_{g\in G}m_{g}\right)=\infty$ can only happen if all $m_{g}$ are zero, and it is just as clear that $v(hh')=v(h)v(h')$ if $h$ and $h'$ are homogeneous. Suppose $x$, $y$ and $z$ are such that $v(x)\geq v(z)\leq v(y)$, then $v(x+y)\geq v(z)$ since this property holds in $\Omega$.

We certainly have $R\subseteq T_{v}$ and we know that $1_{e}\in R$ for any idempotent $e\in G_{0}$. Suppose now that $v(h)\geq v(\textbf{t}(h))$ for some homogeneous $h$. If $h$ is not in $R$, then $h^{-1}$ is in $R$, whence $v(h^{-1})\geq v(\textbf{t}(h^{-1}))=v(\textbf{s}(h))$, leading to $v(\textbf{t}(h))=v(h^{-1})v(h)\geq v(\textbf{s}(h))v(h)=v(h)$, i.e. $v(h)=v(\textbf{t}(h))$. This means that $\overline{h}=\overline{1_{\textbf{t}(h)}}$, so there are $r_{s},r_{t}$ in $H(R)$ with $h=r_{s}1_{\textbf{t}(h)}r_{t}$ hence $h\in R$. 
\end{proof}

In view of this theorem, it is harmless to restrict attention to \emph{canonical} $G$-valuation, i.e. $G$-valuations which satisfy
\begin{enumerate}
	\setcounter{enumi}{\value{enumcounter}}
	\item $v(x)\geq v(\textbf{t}(x))\quad\Leftrightarrow\quad v(x)\geq v(\textbf{s}(x))$.
\end{enumerate}
in addition to the previously mentioned conditions. From now on, we will assume for the sake of simplicity that all $G$-valuations are canonical.

\begin{cor}
In the same context as \ref{Gval}, we have
\begin{align*}
	\left\{h\in H(R)\mid h^{-1}\notin R\right\}&=\left\{h\in  H(R)\mid v(h)>v(\textbf{t}(h))\right\}.
\end{align*}
\end{cor}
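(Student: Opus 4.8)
The plan is to reduce the claimed equality of sets to a statement purely about the value groupoid $\Gamma$ by first upgrading the last paragraph of the proof of Proposition \ref{Gval} to a clean biconditional: for a nonzero homogeneous $h\in Q$ one has $h\in R$ if and only if $v(h)\geq v(\textbf{t}(h))$. The implication that $v(h)\geq v(\textbf{t}(h))$ forces $h\in R$ is exactly what is shown at the end of \ref{Gval}. For the converse I would argue that the set $P=\{0\}\cup\{h\in H(Q)\mid v(h)\geq v(\textbf{t}(h))\}$ is stable under the graded ring operations. Stability under sums within a fixed degree follows from valuation axiom (2) together with the fact, noted at the end of the proof of \ref{Gval}, that homogeneous elements of the same degree have comparable values and satisfy $v(h_1+h_2)\geq\min\{v(h_1),v(h_2)\}$. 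Stability under composable products follows from order-compatibility in $\Gamma$: if $v(h_1)\geq\textbf{t}(v(h_1))=\textbf{s}(v(h_2))$ and $v(h_2)\geq\textbf{t}(v(h_2))$ with $h_1h_2$ defined, then right-multiplying the first inequality by $v(h_2)$ gives $v(h_1h_2)=v(h_1)v(h_2)\geq v(h_2)\geq\textbf{t}(v(h_2))=\textbf{t}(v(h_1h_2))$. Hence the degree-$g$ part of the subring $T_v=R$ generated by $P$ lies in $P$, and since $P\subseteq T_v$ we get $H(R)=P$, the desired biconditional.

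Next I would isolate the relevant fact about ordered groupoids: for $\gamma\in\Gamma$,
\[
	\gamma>\textbf{t}(\gamma)\quad\Longleftrightarrow\quad\gamma^{-1}<\textbf{t}(\gamma^{-1}),
\]
while $\gamma=\textbf{t}(\gamma)$ (i.e. $\gamma$ is idempotent) if and only if $\gamma^{-1}=\textbf{t}(\gamma^{-1})$. This is proved by right-multiplying the inequality $\gamma>\textbf{t}(\gamma)$ by $\gamma^{-1}$, using that both $\gamma\gamma^{-1}=\textbf{s}(\gamma)$ and $\textbf{t}(\gamma)\gamma^{-1}=\gamma^{-1}$ are defined, to obtain $\textbf{s}(\gamma)>\gamma^{-1}$, and then recalling $\textbf{s}(\gamma)=\textbf{t}(\gamma^{-1})$; the equality case is immediate. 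To transport this to $Q$ I use that, by valuation axiom (3), the restriction of $v$ to nonzero homogeneous elements is a homomorphism of groupoids into $\Gamma$ (every such element being $G$-invertible in $Q$ since $Q$ is a $G$-skewfield), so that $v(h^{-1})=v(h)^{-1}$ and $v(\textbf{t}(h))=\textbf{t}(v(h))$ for every nonzero homogeneous $h$.

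Combining the two ingredients finishes the proof. Fix a nonzero $h\in H(R)$; by the biconditional $v(h)\geq v(\textbf{t}(h))$, and since $v(\textbf{t}(h))=\textbf{t}(v(h))$ is the target of $v(h)$ in the ordered groupoid $\Gamma$, comparability leaves exactly two cases. If $v(h)=v(\textbf{t}(h))$, then the groupoid lemma gives $v(h^{-1})=v(\textbf{t}(h^{-1}))$, so $h^{-1}\in R$ by the biconditional; if $v(h)>v(\textbf{t}(h))$, then the lemma gives $v(h^{-1})<v(\textbf{t}(h^{-1}))$, so $h^{-1}\notin R$. Thus $h^{-1}\notin R$ precisely when $v(h)>v(\textbf{t}(h))$, which is the asserted equality of sets. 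The main obstacle is the forward direction of the biconditional in the first paragraph: everything else is formal groupoid arithmetic, whereas showing that $R$ contains no homogeneous element of strictly negative value --- equivalently, that the homogeneous part of $T_v$ does not grow beyond $P$ --- demands the careful verification that $P$ is stable under the partially defined multiplication and the degreewise addition.
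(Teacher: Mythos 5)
Your proof is correct, and its computational core---passing from $v(h)>v(\textbf{t}(h))$ to $v(h^{-1})<v(\textbf{t}(h^{-1}))$ by multiplying the inequality by the inverse value, with strictness preserved because multiplication by a ($G$-)invertible element is injective---is exactly the computation in the paper's proof, which you have merely repackaged as a lemma about ordered groupoids. Where you genuinely go beyond the paper is your first paragraph. The paper's proof concludes ``so $h^{-1}\notin R$'' from $v(h^{-1})<v(\textbf{t}(h^{-1}))$, and in the other direction extracts $v(h^{-1})<v(\textbf{t}(h^{-1}))$ from $h^{-1}\notin R$; both steps tacitly use the biconditional $h\in H(R)\Leftrightarrow v(h)\geq v(\textbf{t}(h))$, of which Proposition \ref{Gval} only proves the right-to-left implication---since $T_{v}$ is merely \emph{generated} by the homogeneous elements of value at least that of their target, it could a priori contain further homogeneous elements arising from sums of products. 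Your verification that this generating set (together with $0$) is closed under composable products and same-degree sums, so that $H(T_{v})$ does not grow, makes this implicit step explicit; the only micro-addendum needed is closure under negation (for $\mathbb{Z}$-linear combinations), which is immediate since $-1_{e}\in H(R)^{*}$ gives $v(-h)=v(h)$ in the construction of \ref{Gval}. One further small divergence: the paper turns $h^{-1}\notin R$ into a strict inequality by invoking comparability of $v(h^{-1})$ with its target in the ordered groupoid $\Gamma$, whereas your case split of $v(h)\geq v(\textbf{t}(h))$ into equality versus strict inequality needs no comparability beyond the hypothesis, which is marginally more economical. Finally, beware a notational clash: you use $P$ for $\left\{h\mid v(h)\geq v(\textbf{t}(h))\right\}$, while the paper reserves $P$ for the set of positives $\left\{h\in H(R)\mid h^{-1}\notin R\right\}$ defined immediately after this corollary.
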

\begin{proof}
Take a homogeneous $h$ with $v(h)>v(\textbf{t}(h))$, then 
\begin{align*}
	v(h^{-1})=v(\textbf{t}(h))v(h^{-1})< v(h)v(h^{-1})=v(\textbf{s}(h))=v(\textbf{t}(h^{-1}))
\end{align*} 
so $h^{-1}\notin R$. On the other hand, if $h\in H(R)$ and $h^{-1}\notin R$, then $v(h^{-1})<v(\textbf{t}(h^{-1}))=v(\textbf{s}(h))$ so \[v(\textbf{t}(h))=v(h^{-1})v(h)<v(\textbf{s}(h))v(h)=v(h).\]
\end{proof}

\noindent This set will be denoted by the $P$ of positive. As in the classical case, a $G$-valuation ring is completely determined by its set of positives. Indeed, $R$ is the ring generated by $\left\{h\in H(Q)\mid hP\subseteq P\right\}$. If $R$ is a $G$-valuation ring in a $G$-skewfield $Q$, then $R/P$ inherits a canonical $G$-grading and, in view of the preceding corollary, it will again be a $G$-skewfield, $\overline{Q}$ say. The map \[\pi:Q\cup\left\{\infty\right\}\to\overline{Q}\left\{\infty\right\}:x\mapsto 
\begin{cases}
	\overline{x}&\text{if $x\in R$}\\
	\infty&\text{otherwise}
\end{cases}\]
can then be reasonably be called a \emph{$G$-place} of $Q$ in $\overline{Q}$. In the classical case, there is a one-one correspondence between valuation ring and places (cfr. e.g. \cite{Endler}). No doubt, this could be generalised to the $G$-graded context as well. For the sake of conciseness we will not go into this here.

Let $\Gamma$ and $\Delta$ be ordered groupoids. A bijection $f:\Gamma\to\Delta$ is an \emph{order-preserving isomorphism} if 
\begin{enumerate}
	\item $\forall \gamma,\gamma',\gamma''\in\Gamma:\gamma\gamma'=\gamma''\Leftrightarrow f(\gamma)f(\gamma')=f(\gamma'')$,
	\item $\forall \gamma,\gamma'\in\Gamma\,\forall\delta,\delta'\in\Delta:\gamma\leq\gamma'\Leftrightarrow f(\gamma)\leq f(\gamma')$.
\end{enumerate} 
Two $G$-valuations $v:Q\to\Gamma$ and $w:Q\to\Delta$ are called \emph{equivalent} if there exists an order-preserving isomorphism $f:\Gamma\to\Delta$ with $v(h)=f(w(h))$ for every homogeneous $h$.

\begin{prop}
Two $G$-valuations $v:Q\to\Gamma$ and $w:Q\to\Delta$ are equivalent if and only if $T_{v}=T_{w}$.
\end{prop}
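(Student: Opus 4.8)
The plan is to prove the two implications separately; the forward direction is formal, while the converse carries the content. For ($\Rightarrow$), assume $v$ and $w$ are equivalent, so there is an order-preserving isomorphism relating them with $w(x)=f(v(x))$ for every homogeneous $x$ (the direction of $f$ is immaterial, since equivalence is symmetric). Recall that $T_{v}$ is generated by the homogeneous $h$ with $v(h)\geq v(\textbf{t}(h))$, and that $\textbf{t}(h)=1_{\textbf{t}(g)}$ is itself homogeneous. Applying $f$ and using that it is order-preserving with $w(\textbf{t}(h))=f(v(\textbf{t}(h)))$, I get $v(h)\geq v(\textbf{t}(h))\iff w(h)\geq w(\textbf{t}(h))$. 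Thus $T_{v}$ and $T_{w}$ have the same homogeneous generators, so $T_{v}=T_{w}$.

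For ($\Leftarrow$), write $R=T_{v}=T_{w}$ and first recast the information in $v$ in terms of $R$ alone. By definition of $T_{v}$ a homogeneous $h$ lies in $R$ iff $v(h)\geq\textbf{t}(v(h))$, and by the corollary to Proposition~\ref{Gval} one has $h\in H(R)^{*}$ iff $v(h)$ is an idempotent of the value groupoid. The lemma I would isolate is an order comparison: for homogeneous $h,h'$ of the same degree $g$, the ordered-groupoid compatibility gives $v(h)\leq v(h')\iff \textbf{s}(v(h))\leq v(h'h^{-1})\iff h'h^{-1}\in R$, where the last step uses that $h'h^{-1}$ has idempotent degree $\textbf{s}(g)$. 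The identical computation for $w$ yields $h'h^{-1}\in R\iff w(h)\leq w(h')$, so that $v(h)\leq v(h')\iff w(h)\leq w(h')$, and by antisymmetry $v(h)=v(h')\iff w(h)=w(h')$ on each $Q_{g}$. Together with axiom (3) this matches the orderings of all homogeneous values and identifies the sub-groupoids $v(H(Q)^{*})\cong w(H(Q)^{*})$, both being copies of the quotient $H(Q)^{*}/H(R)^{*}$ appearing in Proposition~\ref{Gval}.

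It remains to promote this to a genuine order-preserving isomorphism $f$ between the full value groupoids and to secure the compatibility $v(h)=f(w(h))$ on homogeneous elements. The natural move is to set $f(v(x))=w(x)$ using the surjectivity of $v$, and the main obstacle is precisely its well-definedness on \emph{non-homogeneous} values: one must show $v(x)=v(y)\Rightarrow w(x)=w(y)$ even though such $x,y$ are not $G$-invertible, so the classical argument \emph{the quotient is a unit of $R$} is unavailable. I expect to resolve this by reducing to the canonical valuation of Proposition~\ref{Gval}: that construction manufactures from $R$ a distinguished canonical $G$-valuation $v_{R}$ whose value on any $\sum_{g}m_{g}$ is computed by a recipe (a minimum over the lowest occupied components) depending only on the $R$-determined ordering of homogeneous components established above. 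Showing that every canonical $v$ with $T_{v}=R$ is equivalent to $v_{R}$ — that is, that both the value groupoid and the value of each element are forced, up to order-isomorphism, by $R$ — then gives $v\sim v_{R}\sim w$ by transitivity of equivalence. This reduction, rather than the surrounding algebra, is where I expect the real difficulty to lie.
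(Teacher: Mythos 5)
Your forward direction is correct and coincides with the paper's argument, so no complaints there. The backward direction, however, is not a proof: at the decisive moment you announce that you ``expect to resolve'' the well-definedness of $f$ by showing that every canonical $G$-valuation $v$ with $T_{v}=R$ is equivalent to the canonical valuation $v_{R}$ constructed in Proposition \ref{Gval}. But since $T_{v_{R}}=R$, that statement \emph{is} the ($\Leftarrow$) direction of the proposition you are proving, applied to the pairs $(v,v_{R})$ and $(w,v_{R})$; conversely it follows from the ($\Leftarrow$) direction trivially. So the proposed ``reduction'' is a restatement of the goal modulo transitivity of equivalence, and your write-up stops exactly where the essential argument has to begin. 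Saying that the value groupoid and the values are ``forced, up to order-isomorphism, by $R$'' names the theorem; it does not prove it.

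What the paper does at this point is direct and short: define $f:\Gamma\to\Delta:\gamma\mapsto w(v^{-1}(\gamma))$ and prove well-definedness by the orbit description of fibers — if $v(h)=v(h')$ then $h=r_{s}h'r_{t}$ for some $r_{s},r_{t}\in H(T_{v})^{*}$, i.e.\ elements of equal value differ by two-sided multiplication by homogeneous units of $R$ (this is where the construction of the value groupoid as $H(Q)^{*}/H(R)^{*}$ with its $H(R)$-induced order enters). Since $T_{v}=T_{w}$ gives $H(T_{v})^{*}=H(T_{w})^{*}$, and homogeneous units of $T_{w}$ take idempotent $w$-values (essentially your own observation via the corollary), one gets $w(h)=w(r_{s})w(h')w(r_{t})=w(h')$; the same computation yields injectivity, surjectivity is free from surjectivity of $w$, and multiplicativity and order-preservation follow from the same relations. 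Your same-degree lemma, $v(h)\leq v(h')\iff h'h^{-1}\in R\iff w(h)\leq w(h')$, is a correct fragment of this, but it only covers pairs whose degrees share a target; the two-sided relation $h=r_{s}h'r_{t}$ is what handles arbitrary comparable pairs, and it is the single fact your proposal needed to isolate and prove. Your instinct that the fibers over values not attained by homogeneous elements are the delicate spot is fair criticism of the paper's own brevity — note, though, that equivalence only requires $v(h)=f(w(h))$ for \emph{homogeneous} $h$, which softens the problem — but flagging a difficulty is not the same as resolving it, and as submitted the proof of the ($\Leftarrow$) direction is missing.
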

\begin{proof}
Suppose $f$ is an order-preserving isomorphism with $v(h)=f(w(h))$. Then $h\in T_{v}$ if $v(h)\geq v(\textbf{t}(h))$ i.e. $f(w(h))\geq f(w(\textbf{t}(h)))$ which happens precisely if $w(h)\geq w(\textbf{t}(h))$, or in other words, when $h\in T_{w}$.

If, on the other hand, $T_{v}=T_{w}$, then we can define $f:\Gamma\to\Delta:\gamma\mapsto w(v^{-1}(\gamma))$. We must first check that this is indeed a function, so suppose $\gamma=v(h)=v(h')$, then there are some $r_{s},r_{t}\in H(T_{v})^{*}$ with $h=r_{s}h'r_{t}$. These $r_{s},r_{t}$ must necessarily be in $H(T_{w})^{*}$ whence $w(h)=w(r_{s})w(h')w(r_{t})=w(h')$, so $f$ is well-defined. Essentially the same argument proves injectivity and surjectivity we get for free because $w$ is surjective. Moreover, we have $f(\gamma\gamma')= w(v^{-1}(\gamma\gamma'))=w(v^{-1}(\gamma)v^{-1}(\gamma'))$ since $v$ is a $G$-valuation. The right hand side of the last equality is in turn equal to $w(v^{-1}(\gamma))w(v^{-1}(\gamma'))$ which is $f(\gamma)f(\gamma')$ as had to be proven. Finally, if $v(x)=\gamma\leq\gamma'=v(x')$, then there are $r_{s},r_{t}\in H(T_{v})$ with $r_{s}xr_{t}=x'$. Since $T_{v}=T_{w}$, we have $w(x')=w(r_{s}xr_{t})\geq w(x)$. Consequently, $f$ is an order-preserving isomorphism hence $v$ and $w$ are equivalent. 
\end{proof}

\begin{prop}\label{Itsagroup}
If $\mathrm{supp}(Q)$ is connected and equals $\mathrm{supp}(R)$ then the $G$-valuation from \ref{Gval} takes values in a group .  
\end{prop}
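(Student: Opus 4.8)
The plan is to show that the ordered groupoid $\Gamma$ in which $v$ takes its values has a single idempotent, since a groupoid with exactly one identity is a group. Because $v$ is surjective and multiplicative on $G$-invertible homogeneous elements, it respects sources and targets: from $hh^{-1}=\textbf{s}(h)$ one reads off $\textbf{s}(v(h))=v(\textbf{s}(h))=v(1_{e})$ for $e=\textbf{s}(h)$, and symmetrically for targets. Hence the idempotents (identities) of $\Gamma=\Omega[\overline{G}]^{*}$ are exactly the values $v(1_{e})$ with $e\in G_{0}$, and the whole statement reduces to proving $v(1_{e})=v(1_{e'})$ for any two idempotents $e,e'$ of $G$.

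The mechanism I would use to force this collapse is the following. Suppose $t\in H(R)^{*}$ is a $G$-invertible homogeneous element of $R$ connecting $e$ and $e'$, say $\textbf{s}(t)=1_{e}$ and $\textbf{t}(t)=1_{e'}$. Since both $t$ and $t^{-1}$ lie in $R=T_{v}$, we have $v(t)\geq v(\textbf{t}(t))$ and $v(t^{-1})\geq v(\textbf{s}(t))$; multiplying the second inequality on the right by $v(t)$ gives $v(\textbf{t}(t))=v(t^{-1})v(t)\geq v(\textbf{s}(t))v(t)=v(t)$, so $v(t)=v(\textbf{t}(t))=v(1_{e'})$. As $v(1_{e'})$ is an identity while $v(t)$ is a morphism from $v(1_{e})$ to $v(1_{e'})$, this forces $v(1_{e})=v(1_{e'})$. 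Thus everything comes down to the purely ring-theoretic claim that, under the hypotheses, $H(R)^{*}$ is a connected groupoid, i.e.\ every pair of idempotents of $G$ is joined by a $G$-invertible element of $R$.

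To produce such a $t$ I would exploit that $\mathrm{supp}(R)=\mathrm{supp}(Q)$ is connected. Given $e,e'$, pick $g\in\mathrm{supp}(Q)$ connecting them; then both $R_{g}$ and $R_{g^{-1}}$ are non-zero, and I choose non-zero $r\in R_{g}$, $s\in R_{g^{-1}}$. Since $Q$ is a $G$-skewfield, $rs$ is a non-zero element of the skewfield $Q_{e}$, and it lies in the two-sided ideal $R_{g}R_{g^{-1}}$ of the valuation ring $R_{e}$. If $rs$ is a \emph{unit} of $R_{e}$, then $t:=r$ is already $G$-invertible in $R$, with $G$-inverse $s(rs)^{-1}\in R_{g^{-1}}$ (one checks $rt=1_{e}$ and $tr=1_{e'}$ directly); the previous paragraph then finishes the argument, and the same connecting element simultaneously witnesses $e\sim e'$ in $\overline{G}$, so that both quotients entering the construction of $\Gamma$ collapse at once.

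The main obstacle is precisely this last step. Full connected support only guarantees that $rs$ is non-zero, which yields that $\Gamma$ is a \emph{connected} groupoid, but not that $rs$ is invertible in $R_{e}$: a priori $R_{g}R_{g^{-1}}$ could sit inside the maximal ideal $\mathfrak{m}_{e}$, in which case no connecting element of $R$ is $G$-invertible and the idempotents $v(1_{e})$ remain distinct. Establishing $R_{g}R_{g^{-1}}=R_{e}$ from the stated hypotheses is therefore where the real work lies; I would expect to need a sharper simultaneous use of $G$-totality and $G$-stability (to control the fractional $R_{e}$-ideals $R_{g}$ and $R_{g^{-1}}$ and rule out their product landing in $\mathfrak{m}_{e}$), and possibly a mild regularity assumption on the value groups of the $R_{e}$, in order to close the gap between ``connected groupoid'' and ``group''.
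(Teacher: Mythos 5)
Your reduction is correct and is in fact the exact skeleton of the paper's own argument: everything comes down to joining any two idempotents $e,e'\in G_{0}$ by a homogeneous $t\in R$ whose $G$-inverse also lies in $R$, since then $\overline{1_{e}}=\overline{1_{e'}}$ in $\Omega$ (equivalently $v(1_{e})=v(1_{e'})$, by the computation you give), $\Omega$ has a single idempotent and is a group, and the collapse of $\overline{G}$ follows. The paper dispatches this crux in one sentence, asserting that connectedness of the common support yields $r,r'\in R$ with $1_{e}=r1_{e'}r'$ -- and this assertion is literally equivalent to your missing claim: decomposing $r,r'$ into homogeneous parts, the $Q_{e}$-component of $r1_{e'}r'$ is a finite sum of products $r_{g}r'_{g^{-1}}\in R_{e}$ over $g$ connecting $e$ and $e'$; since the non-units of the total subring $R_{e}\subseteq Q_{e}$ form an ideal, at least one such product is a unit $u$ of $R_{e}$, and then $t=r_{g}$ is $G$-invertible in $R$ with $t^{-1}=r'_{g^{-1}}u^{-1}$. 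So your proposal, which leaves exactly this point open, does not prove the proposition; but it isolates precisely the step that the paper, too, states without proof.

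Moreover, your suspicion that full connected support does not force $R_{g}R_{g^{-1}}=R_{e}$ is well founded. Take a valuation ring $R_{v}$ with maximal ideal $\mathfrak{m}_{v}$ in a field $k$ and consider
\[
R=\begin{pmatrix} R_{v} & R_{v}\\ \mathfrak{m}_{v} & R_{v}\end{pmatrix}\subseteq Q=M_{2}(k).
\]
One checks directly that $R$ is $G$-total (for $a e_{12}$: either $v(a)\geq 0$ and $ae_{12}\in R$, or $v(a)<0$ and $a^{-1}e_{21}\in\mathfrak{m}_{v}e_{21}\subseteq R$; similarly for $ae_{21}$ and the diagonal) and $G$-stable, with $\mathrm{supp}(R)=\Delta_{2}=\mathrm{supp}(Q)$ connected. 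Yet $R_{e_{12}}R_{e_{21}}=\mathfrak{m}_{v}\subseteq\mathfrak{m}_{e_{11}}$, and an identity $1_{e_{11}}=r1_{e_{22}}r'$ with $r,r'\in R$ would force a unit into $\mathfrak{m}_{v}$; so no $G$-invertible homogeneous element of $R$ connects the two idempotents, and $\overline{1_{e_{11}}}\neq\overline{1_{e_{22}}}$ in $\Omega$. Note that $R_{v}$ may be taken discrete here, so no ``mild regularity assumption'' on the value groups can close the gap: the proposition as stated needs a genuinely stronger hypothesis, such as the one appearing in Proposition \ref{GVisDVinM} (that $R$ contain all $1_{k}\delta_{ij}$), under which the matrix units themselves are the required $G$-invertible connectors and your argument completes verbatim. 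In short: your proof is incomplete, but the incompleteness is a faithful diagnosis of a real gap in the paper's proof of \ref{Itsagroup} rather than a failure of your approach.
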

\begin{proof}
Suppose $\mathrm{supp}(Q)$ is connected and equal to $\mathrm{supp}(R)$. Take $e,e'$ in $G_{0}$, then there are $r,r'\in R$ with $1_{e}=r1_{e'}r'$, so $\overline{1_{e}}=\overline{1_{e'}}$. Consequently, $\Omega$ has but one idempotent, i.e. it is a group. An ordered groupoid which is a group is a totally ordered group, so $\overline{G}$ is the trivial group whence $\Gamma$ is a group.
\end{proof}

\begin{ex}\label{MoreExamples}
Let us first consider an example of the simplest kind: the $\Delta_{2}$-valuation ring \[\begin{pmatrix}
		R_{v}&R_{v}\\R_{v}&R_{v}
	\end{pmatrix}\quad\text{contained in the $\Delta_{2}$-skewfield}\quad\begin{pmatrix}k&k\\k&k\end{pmatrix}\] for a valuation ring $R_{v}$ with maximal ideal $P$ in a field $k$. In this case, $\overline{G}$ is the trivial group and $\Omega\simeq\Gamma\simeq R_{v}/P$. The associated value function is \[v:Q\to\Gamma\cup\left\{\infty\right\}:\begin{pmatrix}a&b\\c&d\end{pmatrix}\mapsto\min\left\{v(a),v(b),v(c),v(d)\right\}.\]
	This and similar value functions have been studied in \cite{theOtherBook}. Whether a deeper connection exists between groupoid valuations and the value functions considered there would be an interesting topic for future research.   

Matters get a bit more complicated if we consider the situation from example \ref{Gvalex}. Here, $\overline{\Delta_{2}}=\Delta_{2}$ and we find that $\Omega\simeq\mathcal{E}\left[\Delta_{2}\right]$ where $\mathcal{E}$ is the value group of the valuation $v$. For example, $\overline{1_{1,1}}$ and $\overline{1_{2,2}}$ are incomparable in $\Omega$, while $\overline{1_{1,2}}$ is larger and $\overline{1_{2,1}}$ is smaller than both. Let $a$ and $b$ be in $k$ with $v(a)>v(b)$. Then we have, if we denote by some abuse of notation the valuation on the $G$-skewfield with $v$ as well, \[v\left(\begin{pmatrix}b&a\\b&b\end{pmatrix}\right)\geq v\left(\begin{pmatrix}a&b\\a&a\end{pmatrix}\right)\text{ while e.g. }v\left(\begin{pmatrix}a&b\\b&b\end{pmatrix}\right)\text{ and }v\left(\begin{pmatrix}b&b\\b&a\end{pmatrix}\right)\] are incomparable.
\end{ex}


Consider a simple artinian $Q$ which is finite dimensional over its centre $Z(Q)$ and a complete discrete valuation ring $R$ on $Z(Q)$. By the Artin-Wedderburn theorem, we have $Q\simeq M_{n}(D)$ for some skewfield $D$ which is finite dimensional over $Z(Q)$. It is known (cfr. \cite{MaxOrd}) that $R$ is contained in a unique maximal order $\mathcal{O}$ of $D$ and that any maximal order in $M_{n}(D)$ is of the form $qM_{n}(\mathcal{O})q^{-1}$ for some invertible $q$.

\begin{lem}
If $R$ is a $G$-valuation ring on a $G$-skewfield $Q$ and if $q\in U(Q)$, then $qRq^{-1}$ is a $G$-valuation ring as well. 
\end{lem}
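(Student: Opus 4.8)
The plan is to read the statement as a transport of structure along the inner automorphism $\phi:Q\to Q:x\mapsto qxq^{-1}$. Since $q\in U(Q)$ is a genuine unit, $\phi$ is a ring automorphism of $Q$ with inverse $x\mapsto q^{-1}xq$; and by part (4) of the proposition on $G$-inverses the ordinary inverse $q^{-1}$ coincides with the $G$-inverse, so no ambiguity of notation arises. The crucial thing to notice at the outset is that $q$ need not be homogeneous — indeed a homogeneous unit would, by part (4) of the first proposition, force $G$ to be a group — so $qRq^{-1}$ is in general \emph{not} a homogeneous subring for the original grading on $Q$. One therefore has to conjugate the grading together with the ring.

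Concretely, I would set $Q_g'=\phi(Q_g)=qQ_gq^{-1}$ and verify that $(Q_g')_{g\in G}$ is again a $G$-grading making $Q$ a $G$-skewfield. As $\phi$ is an additive bijection, $Q=\bigoplus_g Q_g'$; as $\phi$ is multiplicative, $Q_g'Q_{g'}'=\phi(Q_gQ_{g'})$ lies in $\phi(Q_{gg'})=Q_{gg'}'$ when $gg'$ exists and vanishes otherwise. Moreover $\phi$ carries homogeneous $G$-invertible elements to homogeneous $G$-invertible ones: from $hh^{-1}=1_{\textbf{s}(h)}$ and $h^{-1}h=1_{\textbf{t}(h)}$ one gets $\phi(h)\phi(h^{-1})=\phi(1_{\textbf{s}(h)})$ and $\phi(h^{-1})\phi(h)=\phi(1_{\textbf{t}(h)})$, so $\phi(h^{-1})$ is the $G$-inverse of $\phi(h)$ in the new grading, whose idempotents are the $\phi(1_e)=q1_eq^{-1}$. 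Hence $\phi$ is an isomorphism of $G$-skewfields from $Q$ with its original grading onto $Q$ with the grading $(Q_g')$.

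It then remains to push $G$-totality and $G$-stability through $\phi$. The subring $qRq^{-1}=\phi(R)$ inherits the homogeneous decomposition $\phi(R)_g=qR_gq^{-1}$. Every homogeneous element of $(Q,(Q_g'))$ is $\phi(h)$ for some homogeneous $h$ in the original grading, and $\phi(h)\in\phi(R)\Leftrightarrow h\in R$ while $\phi(h)^{-1}=\phi(h^{-1})\in\phi(R)\Leftrightarrow h^{-1}\in R$; thus $G$-totality of $R$ yields $G$-totality of $qRq^{-1}$. For stability one computes, with sources and targets taken in the new grading (they correspond to the old ones under $\phi$), $\phi(h)\,\phi(R)_{\textbf{t}(\phi(h))}\,\phi(h)^{-1}=\phi\big(hR_{\textbf{t}(h)}h^{-1}\big)=\phi(R_{\textbf{s}(h)})=\phi(R)_{\textbf{s}(\phi(h))}$, using the $G$-stability of $R$. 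So $qRq^{-1}$ is $G$-total and $G$-stable, i.e. a $G$-valuation ring.

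The one point I would flag as the real content — everything else being a formal consequence of $\phi$ preserving the grading — is the reinterpretation in the first paragraph: the statement must not be read as asserting that $qRq^{-1}$ is graded for the original grading on $Q$, which fails precisely when $q$ is inhomogeneous, but rather with respect to the conjugated grading $Q_g'=qQ_gq^{-1}$. Once that is granted, the lemma is an invariance-under-$G$-graded-isomorphism statement and the remaining verifications are routine.
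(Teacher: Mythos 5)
Your proof is correct and takes essentially the same route as the paper: the paper likewise argues by conjugation, using $G$-totality of $R$ to get $h^{-1}\in qRq^{-1}$ and $G$-stability for the second condition, and it relegates to a follow-up remark precisely the point you rightly make explicit at the outset, namely that $qRq^{-1}$ is a $G$-valuation ring for the conjugated grading $Q_g'=qQ_gq^{-1}$ (with homogeneous elements $qhq^{-1}$), not for the original one. If anything, your transport-of-structure formulation is cleaner on the stability step, where the paper detours through conjugation by $hqh^{-1}$, while your direct identity $\phi(h)\,\phi(R)_{\textbf{t}(\phi(h))}\,\phi(h)^{-1}=\phi(R)_{\textbf{s}(\phi(h))}$ gives the same conclusion without that detour.
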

\begin{proof}
Suppose $h\in H(Q)\setminus qRq^{-1}$. Then $q^{-1}hq\notin R$ so its $G$-inverse, which is $q^{-1}h^{-1}q$, must be in $R$. Consequently, $h^{-1}\in qRq^{-1}$. On the other hand, $hqR_{\textbf{t}(h)}q^{-1}h^{-1}=hqh^{-1}R_{\textbf{s}(h)}hq^{-1}h^{-1}$. This is furthermore equal to $R_{\textbf{s}(h)}$ since $hqh^{-1}$ has $\textbf{s}(hqh^{-1})=\textbf{t}(hqh^{-1})=\textbf{s}(h)$.
\end{proof}

\begin{rem}
It might be worth pointing out that this $qRq^{-1}$ is a $G$-valuation ring for a different $G$-grading. Indeed, the homogeneous elements will be of the form $qhq^{-1}$ now, where $h$ is homogeneous with respect to the original $G$-grading..
\end{rem}

\noindent We find that, in this case at least, any maximal order is a $G$-valuation ring for a suitable $G$-grading. Note that by \ref{Itsagroup} the associated value function takes values in a group. It seems doubtful that completeness and discreteness are really necessary, which inspires the following question:

\begin{ques}
Is every maximal order a $G$-valuation ring?
\end{ques} 


\section{$G$-valuations and Dubrovin valuation rings}
One of the most important concepts in non-commutative valuation theory is the Dubrovin valuation ring. These rings were introduced by Dubrovin in the eighties and  have been studied quite extensively, in no small part due to their excellent extension properties. For the general theory of Dubrovin valuation rings we refer the interested reader to, for example, \cite{MMU} or \cite{theBook}. In this section we establish a tentative connection between Dubrovin valuation rings and $G$-valuation rings, but there is still work to be done here.
\begin{dfn}
Recall that a subring $R$ of a simple artinian ring $Q$ is called a \emph{Dubrovin valuation ring} if 
\begin{enumerate}
	\item $R/J(R)$ is simple artinian
	\item for every $q\notin R$ there are $r,r'$ in $R$ such that both $rq$ and $qr'$ are in $R\setminus J(R)$.
\end{enumerate}
where $J(R)$ denotes the Jacobson radical of $R$.
\end{dfn}

It is clear from e.g. example \ref{Gvalex} that not every $G$-valuation ring is a Dubrovin valuation ring. The reason is that the ring under consideration there does not have full support, as the following proposition shows.

\begin{prop}\label{GVisDVinM}
If $Q\simeq M_{n}(D)$  for a skewfield $D$ and $R$ is a $\Delta_{n}$-valuation ring containing all $1_{k}\delta_{ij}$, then $R$ is a Dubrovin valuation ring.
\end{prop}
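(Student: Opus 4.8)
The plan is to use the two hypotheses to determine $R$ completely --- showing $R=M_{n}(V)$ for a single total, invariant valuation ring $V$ of $D$ --- and then to verify the two defining conditions of a Dubrovin valuation ring for $M_{n}(V)$. So first I would fix the shape of $R$. Give $Q\simeq M_{n}(D)$ its canonical $\Delta_{n}$-grading, so that $Q_{\delta_{ij}}=D\delta_{ij}$ and $Q_{\delta_{ii}}\simeq D$. Since $R$ is $G$-total and $G$-stable, each diagonal part $R_{\delta_{ii}}$ is, as observed earlier, a total and invariant valuation ring $V_{i}$ of the skewfield $Q_{\delta_{ii}}\simeq D$. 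Applying $G$-stability to the homogeneous unit $\delta_{ij}\in R$ (its $G$-inverse being $\delta_{ji}$) gives $\delta_{ij}R_{\delta_{jj}}\delta_{ji}=R_{\delta_{ii}}$, which under the identification $Q_{\delta_{ii}}\simeq D$ reads $V_{j}=V_{i}$; hence all the $V_{i}$ coincide with one valuation ring $V$. As every matrix unit lies in $R$, any $x\in R_{\delta_{ij}}$ can be written $x=(x\delta_{ji})\delta_{ij}$ with $x\delta_{ji}\in R_{\delta_{ii}}=V\delta_{ii}$, so $R_{\delta_{ij}}=V\delta_{ij}$, and summing over $i,j$ yields $R=M_{n}(V)$. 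I would stress that it is the matrix-unit hypothesis, not mere full support, that forces this collapse: the off-diagonal inflations of Example~\ref{Gvalex}, such as $\left(\begin{smallmatrix}R_{v}&R_{v}\\ \mathfrak{m}_{v}&R_{v}\end{smallmatrix}\right)$, have full support yet are not of this form (indeed their quotient by the Jacobson radical is a product of two residue fields, so they already fail condition~(1)).

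Next I would check condition~(1). A valuation ring $V$ of a skewfield is local: $J(V)$ is its unique maximal two-sided ideal and $\overline{D}:=V/J(V)$ is a skewfield. Using $J(M_{n}(V))=M_{n}(J(V))$, we obtain $R/J(R)\cong M_{n}(\overline{D})$, which is simple artinian, establishing~(1).

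For condition~(2) I would bring in the value function $v\colon D^{*}\to\Gamma$ attached to $V$ by Schilling's correspondence, where $\Gamma$ is a totally ordered group, $v(xy)=v(x)+v(y)$, and $V$, $J(V)$ consist of the elements of value $\geq 0$, $>0$ respectively. Given $q=(q_{ij})\notin R=M_{n}(V)$, some entry lies outside $V$, so among the finitely many non-zero entries one may pick $(i_{0},j_{0})$ with $v(q_{i_{0}j_{0}})$ minimal; this minimum is $<0$, so $c:=q_{i_{0}j_{0}}^{-1}$ satisfies $v(c)>0$, i.e.\ $c\in J(V)\subseteq V$. Put $r:=c\,\delta_{j_{0}i_{0}}\in M_{n}(V)=R$. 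Then $rq$ is supported on row $j_{0}$, with entries $q_{i_{0}j_{0}}^{-1}q_{i_{0}l}$ of value $-v(q_{i_{0}j_{0}})+v(q_{i_{0}l})\geq 0$ by minimality, so $rq\in R$, while its $(j_{0},j_{0})$-entry equals $1$ and therefore $rq\notin M_{n}(J(V))=J(R)$. The very same $r$ works on the right: $qr$ is supported on column $i_{0}$, with entries $q_{ij_{0}}q_{i_{0}j_{0}}^{-1}\in V$ and $(i_{0},i_{0})$-entry $1$, so $qr\in R\setminus J(R)$. This gives~(2) and completes the proof.

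I expect the first step to be the real content: it is exactly where the matrix-unit hypothesis is indispensable, and where one must exclude the full-support-but-not-Dubrovin phenomenon illustrated above. Condition~(2) looks as though it might require delicately handling arbitrary, non-homogeneous $q$, but the value function turns it into a finite comparison of entry-values that is settled by extracting the single largest entry with one scaled matrix unit.
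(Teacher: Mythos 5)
Your proof is correct, but it takes a genuinely different route from the paper. The paper never identifies $R$ explicitly: it invokes Proposition~\ref{MaxId} to get the maximal homogeneous ideal $M$ generated by the non-$G$-invertible homogeneous elements, argues that the matrix units force $M$ to be maximal with $R/M\simeq\bigoplus_{\delta}R_{\delta}/M_{\delta}$ simple artinian, and then verifies the second Dubrovin axiom using the intrinsic $G$-valuation of Proposition~\ref{Gval} (group-valued by~\ref{Itsagroup}), multiplying $a\notin R$ by the $G$-inverse of a homogeneous component $a_{\delta}\delta$ of minimal negative value. You instead prove a small structure theorem first --- $G$-stability across the matrix units $\delta_{ij}$ collapses all diagonal rings to a single invariant valuation ring $V$, and the factorisation $x=(x\delta_{ji})\delta_{ij}$ then yields $R=M_{n}(V)$ --- after which both Dubrovin axioms follow by direct matrix computation with the classical Schilling valuation on $D$. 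Your minimal-entry trick for axiom~(2) is the paper's argument in concrete coordinates, but your route buys two real advantages: the explicit identification $J(R)=M_{n}(J(V))$ makes condition~(1) airtight, whereas the paper tacitly equates its homogeneous ideal $M$ with $J(R)$ (a step that deserves justification, since \ref{MaxId} a priori controls only homogeneous ideals); and your side remark that $\left(\begin{smallmatrix}R_{v}&R_{v}\\ \mathfrak{m}_{v}&R_{v}\end{smallmatrix}\right)$ is a full-support $\Delta_{2}$-valuation ring with $R/J(R)\cong\overline{k}\times\overline{k}$ is a correct and valuable clarification, since the paper's informal comment before the proposition could be misread as suggesting that full support alone suffices. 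What the paper's abstract approach buys in exchange is independence from the collapse $R=M_{n}(V)$ and from Schilling's correspondence: it exercises the $G$-valuation machinery built in Propositions~\ref{Gval} and~\ref{Itsagroup} and would survive in situations where no such explicit normal form is available. One cosmetic caveat: you write the value group additively, while the $\Gamma$ arising here may be non-abelian; your inequalities $-v(q_{i_{0}j_{0}})+v(q_{i_{0}l})\geq 0$ and $v(q_{aj_{0}})-v(q_{i_{0}j_{0}})\geq 0$ are still valid in an ordered group provided you keep the factors on the correct sides, as you in fact do.
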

\begin{proof}
By \ref{MaxId}, the ideal $M$ generated by homogeneous elements of $R$ which are not in $R^{*}$ is the unique maximal ideal which does not contain $1_{ii}$ for any $i$. Consider some ideal $I$ containing some $1_{ii}$. Because $R$ contains all $1_{ij}$, it follows that $1_{jj}\in I$ for any $j$. Therefore $I$ must be $R$, so $M$ is maximal. We have $R/M\simeq\bigoplus_{\delta\in\Delta_{n}}R_{\delta}/M_{\delta}$, so $R/M$ is simple Artinian.

Let $v$ be the $G$-valuation as constructed in \ref{Gval}. Note that, by \ref{Itsagroup}, $v$ takes values in a group. If $a=\sum a_{\delta}\delta$ is not in $R$, then there is some $\delta$ with $v(a_{\delta}\delta)<0$ minimal. We find 
\begin{align*}
	v(a(a_{\delta}\delta^{-1}))&=\min_{\gamma\in\Delta_{n}}v(a_{\gamma}\gamma)v((a_{\delta}\delta)^{-1})\\
	&=v(a_{\delta}\delta)v((a_{\delta}\delta)^{-1})=v(1_{l(\delta)})
\end{align*}
which implies that $a(a_{\delta}\delta)^{-1}$ is in $R\setminus M$. In a similar fashion we find an $r$ with $ra\in R\setminus M$, so $R$ is a Dubrovin valuation ring.
\end{proof}

\noindent This suggests the following question: is every Dubrovin valuation ring a $G$-valuation ring? It is known that the property of being a Dubrovin valuation ring is invariant under Morita equivalence, so it would suffice to answer this question for Dubrovin valuation rings in skewfields, i.e.
\begin{ques}
Is every Dubrovin valuation ring in a skewfield a $G$-valuation ring (for some suitable grading)?
\end{ques} 
\noindent In \cite{H&M}, it was shown that this is certainly true in sufficiently nice cases, e.g. if the skewfield is a crossed product and the Dubrovin valuation ring lies above an unramified valuation. Moreover, it is known that the set of divisorial ideals of a Dubrovin valuation ring $R$ forms a groupoid (cfr. \cite{theBook}). This groupoid probably induces a more or less canonical grading on the Ore localisation $Q$ of $R$ with respect to which $Q$ will be a $G$-skewfield. It is to be expected that the Dubrovin valuation ring will then be a $G$-valuation ring in this $G$-skewfield.

\section*{Acknowledgement}

Some of this research was carried out during the author's doctoral studies at the University of Antwerp under supervision of Freddy Van Oystaeyen. The author wants to thank Arno Fehm for his helpful suggestions concerning the presentation of the material.

\end{document}